\newtheorem{theorem}{Theorem}[section]
\newtheorem{dnt}[theorem]{Definition}
\newtheorem{lemma}[theorem]{Lemma}
\newtheorem{prop}[theorem]{Proposition}
\newtheorem{cor}[theorem]{Corollary}
\newtheorem{prb}[theorem]{Problem}
\begin{document}
\title{Daisy Hamming graphs}
\author[1,2]{Tanja Gologranc\thanks{tanja.gologranc1@um.si}}
\author[1,2]{Andrej Taranenko\thanks{andrej.taranenko@um.si}}

\affil[1]{University of Maribor, Faculty of Natural Sciences and Mathematics, Koro\v{s}ka cesta 160, SI-2000 Maribor, Slovenia}
\affil[2]{Institute of Mathematics, Physics and Mechanics, Jadranska 19, SI-1000 Ljubljana, Slovenia}

\maketitle

\begin{abstract}
Daisy graphs of a rooted graph $G$ with the root $r$ were recently introduced as a generalization of daisy cubes, a class of isometric subgraphs of hypercubes. In this paper we first solve the problem posed in~\cite{Taranenko2020} and characterize rooted graphs $G$ with the root $r$ for which all daisy graphs of $G$ with respect to $r$ are isometric in $G$. We continue the investigation of daisy graphs $G$ (generated by $X$) of a Hamming graph $H$ and characterize those daisy graphs generated by $X$ of cardinality 2 that are isometric in $H$. Finally, we give a characterization of isometric daisy graphs of a Hamming graph $K_{k_1}\Box \ldots \Box K_{k_n}$ with respect to $0^n$ in terms of an expansion procedure.

\textbf{Keywords:} daisy graphs, expansion, isometric subgraphs

\textbf{2010 MSC:} 05C75
\end{abstract}

\section{Introduction and preliminary results}
A recent paper by Klavžar and Mollard \cite{KlaMol-19} introduced a new family of graphs called daisy cubes.
The daisy cube $Q_n(X)$ is the subgraph of
$Q_n$ induced by the union of the intervals $I(x, 0^n)$ over all $x \in X \subseteq V(Q_n)$.
Daisy cubes are shown to be partial cubes (i.e. isometric subgraphs of hypercubes) and include some other previously well known classes of cube-like graphs, e.g. Fibonacci cubes \cite{K2012} and Lucas cubes \cite{MCS2001, T2013}. Regarding daisy cubes, several results have already appeared in the literature. Vesel \cite{V2019} has shown that a cube-complement of a daisy cube is also a daisy cube. Moreover, daisy cubes also appear in chemical graph theory in connection with resonance graphs. Žigert Pleteršek has shown in \cite{ZP2018} that resonance graphs of the so-called kinky benzenoid systems are daisy cubes and Brezovnik et. al. \cite{BTZP2020} characterized catacondensed even ring systems of which resonance graphs are daisy cubes.

Taranenko \cite{Taranenko2020} characterized daisy cubes by means of special kind of pheripheral expansions and thus proved that daisy cubes are tree-like partial cubes~\cite{BIK2003}. In the same paper a generalization of daisy cubes to arbitrary rooted graphs was introduced. These graphs are called \emph{daisy graphs} of rooted graphs with respect to the root. A sufficient but not a necessary condition for a rooted graph $G$ in which every daisy graph of $G$ with respect to the root is isometric in $G$ was presented. We improve this result with another sufficient condition for this and also prove that both conditions together provide a characterization of such graphs. We present these and related results in Section \ref{sec:iso}. In Section \ref{sec:dhg} we focus on daisy graphs of Hamming graphs (with respect to a chosen root), called \emph{daisy Hamming graphs}. Since hypercubes are a special case of Hamming graphs and daisy cubes are a special case of daisy graphs, a natural question that arises is: what properties do isometric daisy Hamming graphs have. Studying the properties of these graphs we obtain a characterization of isometric daisy Hamming graphs in terms of a specific kind of expansion.

We continue this section with some notations and preliminary result.
All graphs $G=(V,E)$ in this paper are undirected and without loops or 
multiple edges. The {\it distance} $d_G(u,v)$ between
two vertices $u$ and $v$ is the length of a shortest $u,v$-path, and
the \emph{interval} $I_G(u,v)$ between $u$ and $v$ consists of all the
vertices on all shortest $u,v$-paths, that is, $I_G(u,v)=\{ x\in V(G):
d_G(u,x)+d_G(x,v)=d_G(u,v)\}.$ For a set $U$ of vertices of a graph $G$ we denote 
by $\left\langle U \right\rangle _G$ the subgraph of $G$ induced by the set $U$. The index $G$ may be omitted when the graph will be clear from the context.
A subgraph $H$ of $G$ is called {\it isometric} if
$d_H(u,v)=d_G(u,v)$, for all $u,v \in V(H)$.

The {\it Cartesian product}
$G=G_1\square\ldots\square G_n$ of $n$ graphs $G_1,\ldots,G_n$ has the
$n$-tuples $(x_1,\ldots,x_n)$ as its vertices (with vertex $x_i$ from
$G_i$) and an edge between two vertices $x=(x_1,\ldots,x_n)$ and
$y=(y_1,\ldots,y_n)$ if and only if, for some $i,$ the vertices $x_i$
and $y_i$ are adjacent in $G_i$, and $x_j=y_j$, for the remaining $j\ne
i$  \cite{HIK2011knjiga}. The Cartesian product of $n$ copies of $K_2$ is a {\it hypercube} or
$n$-{\it cube} $Q_n$. If all the factors in a Cartesian product are complete
graphs then $G$ is called a \emph{Hamming graph}. The Hamming graph $H=K_{k_1} \Box \ldots \Box K_{k_n}$ will be denoted by $H_{k_1,\ldots , k_n}$. Isometric subgraphs of hypercubes are called {\it partial cubes} and isometric subgraphs of Hamming graphs are called \emph{partial Hamming graphs}. Note, a tuple $(x_1,\ldots,x_n)$ may be written in a shorter form as  $x_1\ldots x_n$.

For any positive integer $n$ the set $\{1,\ldots , n\}$ is denoted by $[n]$ and the set $\{0,1,\ldots , n-1\}$ by $[n]_0$.
Let $k_1, \ldots , k_n$ be positive integers and let $V=\prod_{i=1}^n [k_i] _0$. The {\em Hamming distance}, $H(u,v)$, of two vectors $u,v \in V$ is the number of coordinates in which they differ. Note, a Hamming graph $H_{k_1,\ldots , k_n}$ is the graph with the vertex set $\prod_{i=1}^n[k_i]_0$, such that the Hamming distance and the distance function of the graph coincide. Let $v=v_1\ldots v_n \in V(H_{k_1,\ldots ,k_n})$. If $x_1\ldots x_n \in I_{H_{k_1,\ldots ,k_n}}(v, 0^n)$ then $x_i \in \{ 0,v_i \}, \textrm{ for any } i \in [n]$.

\begin{dnt}\cite{Mulder1980}
Let $G$ be a graph and $(u,v,w)$ a triple of vertices of $G$. A triple $(x,y,z)$ of vertices of $G$ is a {\em pseudo-median} of the triple $(u,v,w)$ if it satisfies all of the following conditions:
\begin{enumerate}
    \item \begin{enumerate}[(i)]
        \item there is a shortest $u,v$-path in $G$ that contains both $x$ and $y$;
        \item there is a shortest $v,w$-path in $G$ that contains both $y$ and $z$;
        \item there is a shortest $u,w$-path in $G$ that contains both $x$ and $z$;
    \end{enumerate}
    \item $d(x,y)=d(y,z)=d(x,z)$
    \item $d(x,y)$ is minimal under the first two conditions.
\end{enumerate}
The distance $d(x,y)$ is called the {\em size} of the pseudo-median $(x,y,z)$.
\end{dnt}

Pseudo-median of a triple $(u,v,w)$ of size 0, is called a {\em median} of $(u,v,w)$.
Let $G$ be a graph and $(u,v,w)$ a triple of vertices of $G$. A triple $(x,y,z)$ of vertices of $G$ is a {\em quasi-median} of the triple $(u,v,w)$ if it is a pseudo-median of $(u,v,w)$ and if $(u,v,w)$ has no pseudo-median different from $(x,y,z)$. Note that any triple $(u,v,w)$ of vertices $u=u_1\ldots u_n$, $v=v_1\ldots v_n$, $w=w_1\ldots w_n$ of a Hamming graph $H_{k_1,\ldots ,k_n}$ has a quasi-median $(x,y,z)$, that can be obtained in the following way. If $u_i, v_i$ and $w_i$ are pairwise distinct, then $x_i=u_i$, $y_i=v_i$, $z_i=w_i$. If $u_i,v_i$ and $w_i$ are not all pairwise distinct with at least two of $u_i,v_i,w_i$ equal to $p_i$, then $x_i=y_i=z_i=p_i$. The size of this quasi-median is the number of coordinates in which $u, v$ and $w$ are all distinct~\cite{Mulder1980}.

A binary expansion was first defined in~\cite{Mu1} and a generalization of binary expansion using more covering sets was first introduced in~\cite{Mulder1980}. We will use the definition of general expansion introduced by Chepoi~\cite{chepoi-88}, as follows.

\begin{dnt}\cite{chepoi-88}\label{def:exp}
Let $G$ be a connected graph and let $W_1,W_2, . . . ,W_n$ be subsets of $V(G)$
such that:
\begin{enumerate}
\item $W_i \cap W_j \neq \emptyset $, for all $i,j \in [n]$;
\item $ \bigcup_{i=1}^n W_i = V(G)$;
\item there are no edges between sets $W_i \setminus W_j$ and $W_j \setminus W_i$, for all $i,j \in [n]$;
\item subgraphs $\left\langle W_i \right\rangle, \left\langle W_i \cup W_j \right\rangle$ are isometric in $G$, for all $i,j \in [n]$.
\end{enumerate}
Then to each vertex $x \in V(G)$ we associate a set $\{i_1, i_2,\ldots , i_t\}$ of all indices
$i_j$, where $x \in W_{i_j}$. A graph $G'$ is called an expansion of $G$ relative to the sets
$W_1,W_2, \ldots ,W_n$ if it is obtained from $G$ in the following way:
\begin{enumerate}
\item replace each vertex $x$ of $G$ with a clique with vertices $x_{i_1} , x_{i_2} , \ldots , x_{i_t}$;
\item if an index $i_s$ belongs to both sets $\{i_1, \ldots ,i_t\}, \{i_1',\ldots ,i_l'\}$  corresponding
to adjacent vertices $x$ and $y$ in $G$ then let $x_{i_s}y_{i_s} \in E(G')$.
\end{enumerate}
\end{dnt}

An expansion of $G$ relative to the sets
$W_1,W_2, \ldots ,W_n$  is called {\em peripheral} if there exists $i\in [n]$ such that $W_i=V(G)$. The peripheral expansion of $G$ relative to the sets
$W_1, W_2, \ldots, W_n$ will be denoted by pe$(G;W_1, \ldots, W_n)$.

Let $G=(V,E)$ be a connected graph and $uv\in E(G)$. We define the following sets:
\begin{align*}
& W_{uv}=\{x \in V(G) \ |\  d(u,x) < d(v,x)\};\\
& U_{uv}=\{x \in W_{uv}\ |\  \text{there exists } z \in W_{vu} \text{ such that } xz \in E(G) \};\\
& F_{uv}=\{xz \in E(G)\ |\  x \in U_{uv} \land z \in U_{vu}\}.
\end{align*}

With these sets we can define Djokovi\' c relation $\sim$ as follows~\cite{Djokovic1973}. For $uv,xy \in E(G)$
$$uv \sim xy \textrm{ if and only if } x \in W_{uv} \land y\in W_{vu}.$$ 
It follows from the definition that $F_{uv}$ is precisely the set of edges from $E(G)$  that are in relation $\sim$ with $uv \in E(G)$. Note also that the relation $\sim$ is reflexive and symmetric but not transitive in general. In~\cite{Bresar2001} Bre\v sar introduced relation $\bigtriangleup$ on the edge set of a connected graph as follows.

\begin{dnt}\cite{Bresar2001}
Let $G$ be a connected graph and $uv, xy \in E(G)$. Then $uv \bigtriangleup xy$ if and only if $uv \sim xy$ or there exists a clique with edges $e,f \in E(G)$ such that $xy \sim e$ and $uv \sim f$.
\end{dnt}

Note that the relation $\bigtriangleup$ is also reflexive and symmetric but it is not necessarily transitive. Bre\v sar proved that the relation $\bigtriangleup$ is transitive in partial Hamming graphs~\cite{Bresar2001}. He also proved that  each $\bigtriangleup$-class is a union of some $\sim$-classes. For edges $ab,cd \in E(G)$ the $\sim$-classes $F_{ab}$ and $F_{cd}$ are in the same $\bigtriangleup$-class if and only if there is a clique containing edges $a'b' \in F_{ab}$ and $ c'd' \in F_{cd}$.

\section{Isometric daisy graphs}\label{sec:iso}

In~\cite{Taranenko2020} a generalization of daisy cubes was defined in the following way.

\begin{dnt}\cite{Taranenko2020}\label{defDG}
Let $G$ be a rooted graph with the root $r$. For $X\subseteq V(G)$ the \emph{daisy graph $G_r(X)$ of the graph $G$ with respect to $r$ (generated by $X$)} is the subgraph of $G$ where $$G_r(X)=\left\langle \{ u \in V(G)\ |\ u\in I_G(r,v) \text{ for some }v\in X\} \right\rangle.$$
\end{dnt}

If $H=G_r(X)$ is an isometric subgraph of $G$ we say that $H$ is an {\em isometric daisy graph} of a graph $G$ with respect to $r$.
Note that it follows from Definition \ref{defDG}, that $V(G_r(X))=\bigcup_{v \in X}I_G(v,r)$. Moreover, if $u \in V(G_r(X))$, then $I_G(u,r) \subseteq V(G_r(X))$. Therefore
any convex subgraph $H$ of a rooted graph $G$ with root $r$, such that $H$ contains $r$, is a daisy graph of $G$ with respect to $r$.

In~\cite{Taranenko2020} Taranenko presented a sufficient condition for a rooted graph $G$ with the root $r$ in which any daisy graph with respect to $r$ is isometric. He also proved that the mentioned condition is not necessary.

\begin{prop}~\cite{Taranenko2020}\label{medImpliesIsometric}
Let $G$ be a rooted graph with the root $r$. If for any two vertices of $G$, say $u$ and $v$, it holds that there exists a pseudo-median of $(u,v,r)$ of size 0, then every daisy graph of $G$ with respect to $r$ is isometric in $G$.
\end{prop}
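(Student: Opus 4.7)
The hypothesis says that for every pair $u,v$ the triple $(u,v,r)$ admits a pseudo-median of size $0$, i.e.\ a single vertex $m$ (the median) that lies on a shortest $u,v$-path, on a shortest $u,r$-path, and on a shortest $v,r$-path. My plan is to fix an arbitrary daisy graph $H=G_r(X)$ and two vertices $u,v\in V(H)$, and exhibit a walk in $H$ of length $d_G(u,v)$ by routing through such a median. Since the reverse inequality $d_H(u,v)\ge d_G(u,v)$ is automatic for any subgraph, this suffices.

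By Definition \ref{defDG} there exist $a,b\in X$ with $u\in I_G(r,a)$ and $v\in I_G(r,b)$. Let $m$ be a median of $(u,v,r)$ granted by the hypothesis. The first key step is to show $m\in V(H)$. The identity $d_G(a,r)=d_G(a,u)+d_G(u,r)$ combined with $d_G(u,r)=d_G(u,m)+d_G(m,r)$ (which holds because $m\in I_G(u,r)$) yields
\[
d_G(a,r)=d_G(a,u)+d_G(u,m)+d_G(m,r)\ge d_G(a,m)+d_G(m,r)\ge d_G(a,r),
\]
so equality holds throughout and $m\in I_G(a,r)\subseteq V(H)$.

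The second and main step is to upgrade this to a path statement. Let $P_1$ be any shortest $u,m$-path and $P_2$ any shortest $m,v$-path in $G$. Using the remark after Definition \ref{defDG} (or directly the triangle-equality chain above), every vertex $x\in V(P_1)$ satisfies
\[
d_G(a,r)=d_G(a,u)+d_G(u,x)+d_G(x,m)+d_G(m,r)\ge d_G(a,x)+d_G(x,r)\ge d_G(a,r),
\]
so $x\in I_G(a,r)\subseteq V(H)$. The symmetric argument with $b$ in place of $a$ puts $V(P_2)$ inside $V(H)$. Hence $P_1$ followed by $P_2$ is a walk in $H$ of length $d_G(u,m)+d_G(m,v)$, which equals $d_G(u,v)$ because $m$ lies on a shortest $u,v$-path in $G$. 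Consequently $d_H(u,v)\le d_G(u,v)$, and $H$ is isometric in $G$.

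The only delicate point is the \emph{interval inclusion} used twice: a shortest path between two vertices that both lie on a common geodesic from some $a\in X$ to $r$ stays inside $I_G(a,r)$. Everything else is bookkeeping with the median axioms; I do not expect any real obstacle, since the hypothesis was tailored precisely to produce such a vertex $m$ that simultaneously witnesses $d_G(u,v)=d_G(u,m)+d_G(m,v)$ and keeps both halves of the detour inside $V(H)$.
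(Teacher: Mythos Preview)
Your argument is correct and follows essentially the same route as the paper (the proposition is quoted from \cite{Taranenko2020}, but the analogous Theorem~\ref{th:sufficient} is proved here with the identical idea): route a shortest $u,v$-path through the median $m$, and observe that each half lies inside an interval to $r$ which is contained in $V(H)$. The only remark is that your detour through the generators $a,b\in X$ is unnecessary: once $u\in V(H)$ you already have $I_G(u,r)\subseteq V(H)$ by the remark after Definition~\ref{defDG}, and since $m\in I_G(u,r)$ one gets $I_G(u,m)\subseteq I_G(u,r)\subseteq V(H)$ directly, without ever naming $a$; this is exactly how the paper argues in the size-$1$ case.
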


We give another sufficient condition for a rooted graph $G$ with respect to the root $r$ in which any daisy graph with respect to $r$ is isometric and prove that both conditions yield a characterization of rooted graphs $G$ having all daisy graphs with respect to the root isometric.

\begin{theorem}\label{th:sufficient}
Let $G$ be a rooted graph with the root $r$. If for any two vertices of $G$, say $u$ and $v$, there exists a pseudo-median of size 1 of the triple of vertices $u$, $v$ and $r$, then every daisy graph of $G$ with respect to $r$ is isometric in $G$.
\end{theorem}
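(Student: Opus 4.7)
The plan is to fix a daisy graph $H = G_r(X)$ and two vertices $u, v \in V(H)$ and show that $d_H(u,v) = d_G(u,v)$. The one direction $d_H(u,v) \ge d_G(u,v)$ is free since $H$ is a subgraph; the work lies in producing a shortest $u,v$-path of $G$ that lies entirely inside $V(H)$.

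The first step I would carry out is a preliminary observation already implicit in the discussion after Definition~\ref{defDG}: for every vertex $w \in V(H)$ one has $I_G(w, r) \subseteq V(H)$. Indeed, $w$ lies on a shortest $a$-to-$r$ path for some $a \in X$, and any vertex on a shortest $w$-to-$r$ path can be spliced into that path to produce a shortest $a$-to-$r$ path as well, so it belongs to $I_G(a,r) \subseteq V(H)$. This is the property that lets me translate ``shortest paths to the root'' into ``paths inside $H$''.

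Next I would invoke the hypothesis to obtain a pseudo-median $(x,y,z)$ of size $1$ of the triple $(u,v,r)$. By the definition, $x,y,z$ are pairwise adjacent (in particular $xy \in E(G)$), and there exist shortest paths realising the three path conditions. Using the shortest $u,r$-path through $x$ and $z$, every vertex on its $u$-to-$x$ subpath lies in $I_G(u,r)$, hence in $V(H)$ by the preliminary observation; in particular $x \in V(H)$. Symmetrically, the $v$-to-$y$ subpath of the shortest $v,r$-path through $y$ and $z$ lies inside $I_G(v,r) \subseteq V(H)$, giving $y \in V(H)$. Because $H$ is an induced subgraph and $xy \in E(G)$, the edge $xy$ is present in $H$.

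Finally I would concatenate these three pieces, namely the $u$-to-$x$ subpath constructed above, the edge $xy$, and the $v$-to-$y$ subpath (reversed), to obtain a walk from $u$ to $v$ inside $H$ of length $d_G(u,x) + 1 + d_G(y,v)$. Since $(x,y,z)$ is a pseudo-median of $(u,v,r)$ we have $d_G(u,x) + d_G(x,y) + d_G(y,v) = d_G(u,v)$ and $d_G(x,y) = 1$, so the walk has length $d_G(u,v)$ and is automatically a path. This gives $d_H(u,v) \le d_G(u,v)$ and completes the argument. I do not expect a serious obstacle: the only subtle point is choosing the $u$-to-$x$ and $v$-to-$y$ subpaths inside the shortest paths to $r$ (rather than inside the shortest $u,v$-path guaranteed by condition~(1)(i)), and the triangle $xyz$ of size $1$ is precisely what lets these two subpaths meet via a single edge inside $H$.
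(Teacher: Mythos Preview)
Your proposal is correct and follows essentially the same route as the paper's proof: take a pseudo-median $(x,y,z)$ of size~$1$, use $x\in I_G(u,r)\subseteq V(H)$ and $y\in I_G(v,r)\subseteq V(H)$ to place shortest $u,x$- and $v,y$-paths inside $H$, and bridge them with the edge $xy$ (present in $H$ since $H$ is induced) to realise $d_G(u,v)$ inside $H$. The only cosmetic difference is that the paper phrases the last step via the inequality $d_H(u,v)\le d_H(u,x)+1+d_H(v,y)=d_G(u,v)$ rather than by exhibiting an explicit concatenated walk.
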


\begin{proof}
Let $H$ be an arbitrary daisy graph of $G$ with respect to $r$. Also, let $u$ and $v$ be two arbitrary vertices of $H$, and let $(x,y,z)$ be a pseudo-median of $(u,v,r)$ of size 1. Hence there exists a shortest $u,v$-path in $G$ that contains $x$ and $y$, where $x \in I_G(u,r)$ and $y \in I_G(v,r)$. Thus $I_G(u,x) \subseteq I_G(u,r) \subseteq V(H)$, as $H$ is a daisy graph of $G$ with respect to $r$ and analogously $I_G(v,y) \subseteq I_G(v,r) \subseteq V(H)$. Therefore $d_H(u,x)=d_G(u,x)$ and $d_H(v,y)=d_G(v,y)$. Since $x$ and $y$ lie on a shortest $u,v$-path we get
\begin{align*}
& d_G(u,v)=d_G(u,x)+d_G(x,y)+d_G(y,v)= \\ & =d_G(u,x)+d_G(y,v)+1=d_H(u,x)+d_H(y,v)+1 \geq d_H(u,v).    
\end{align*}
 Moreover, $H$ is a subgraph of $G$ and therefore  $d_G(u,v) \leq d_H(u,v)$ and consequently $H$ is an isometric subgraph of $G$.
\end{proof}

\begin{dnt}
A graph $G$ satisfies the triangle condition if for any three vertices $u, v, w \in V(G)$, such that $d(v,w)=1$ and $d(u,v)=d(u,w)\geq 2$ there exists a vertex $x\in V(G)$ adjacent to $v$ and $w$ with $d(x,u)=d(u,v)-1$.
\end{dnt}

\begin{dnt}
A rooted graph $G$ with the root $r$ satisfies the rooted triangle condition if for any two adjacent vertices $v, w \in V(G)$, such that $d(r,v)=d(r,w)\geq 2$ there exists a vertex $x\in V(G)$ adjacent to $v$ and $w$ with $d(x,r)=d(r,v)-1$.
\end{dnt}

\begin{theorem}\label{th:necessary}
Let $G$ be a rooted graph with the root $r$ such that $G$ satisfies the rooted triangle condition. If every daisy graph of $G$ with respect to $r$ is isometric in $G$, then for any $u,v \in V(G)$ there exists a pseudo-median in $G$ of size 0 or 1 for the triple $u,v$ and $r$.
\end{theorem}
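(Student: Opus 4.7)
My plan is to apply the isometric-daisy hypothesis to a single well-chosen daisy graph, namely $H = G_r(\{u,v\})$, whose vertex set is $I_G(r,u) \cup I_G(r,v)$. Since $H$ is isometric in $G$ by assumption, there is a shortest $u,v$-path $P = p_0 p_1 \cdots p_k$ in $G$ all of whose vertices lie in $I_G(r,u) \cup I_G(r,v)$; this path will be the main object of analysis.

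The first, easy case is when some vertex $p_j$ of $P$ actually lies in the intersection $I_G(r,u) \cap I_G(r,v)$. Then the triple $(p_j, p_j, p_j)$ satisfies conditions~(1) and~(2) of the pseudo-median definition with size $0$, so the pseudo-median of $(u,v,r)$ has size $0$. This also disposes of the degenerate cases $u=v$, $u \in I_G(r,v)$ and $v \in I_G(r,u)$.

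In the remaining case, the sets $A = I_G(r,u) \setminus I_G(r,v)$ and $B = I_G(r,v) \setminus I_G(r,u)$ partition $V(P)$, with $u \in A$ and $v \in B$, so $P$ contains an edge $xy$ with $x \in A$ and $y \in B$. A short triangle-inequality manipulation exploiting $y \notin I_G(r,u)$, $x \notin I_G(r,v)$ and the fact that $xy$ is an edge forces $d(r,x) = d(r,y)$. Calling this common value $d$, I apply the rooted triangle condition to the edge $xy$ when $d \geq 2$ to obtain a vertex $z$ adjacent to both $x$ and $y$ with $d(r,z) = d - 1$; when $d = 1$, I take $z = r$ directly.

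Finally I would verify that the triple $(x,y,z)$ has all pairwise distances equal to $1$ and satisfies conditions~(1) and~(2): $x$ and $y$ lie consecutively on $P$, while $z \in I_G(r,x) \cap I_G(r,y)$ lets me prepend $z$ to a shortest $u,r$-path through $x$ and to a shortest $v,r$-path through $y$. This exhibits a triple of size $1$ meeting conditions~(1) and~(2), so the pseudo-median of $(u,v,r)$ has size at most $1$. The main obstacle will be the equality $d(r,x) = d(r,y)$ together with the boundary case $d(r,x) = 1$, in which the rooted triangle condition is not directly applicable and one must fall back on choosing $z = r$.
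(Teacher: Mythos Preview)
Your proposal is correct and follows essentially the same route as the paper's proof: both apply the hypothesis to the single daisy graph $G_r(\{u,v\})$, locate a crossing edge on a shortest $u,v$-path inside $I_G(r,u)\cup I_G(r,v)$, deduce $d(r,x)=d(r,y)$, and then invoke the rooted triangle condition (or take $z=r$ when $d=1$). The only cosmetic difference is that the paper picks the crossing edge as $u_ju_{j+1}$ with $j$ the \emph{largest} index for which $u_j\in I_G(u,r)$, whereas you use the partition $A\cup B$ of $V(P)$; just make sure when writing it out that your $x\in A$ precedes $y\in B$ along $P$ (which is guaranteed since $u\in A$, $v\in B$), as this orientation is what makes the triangle-inequality step go through.
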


\begin{proof}
Let $u$ and $v$ be two arbitrary vertices of a rooted graph $G$ with the root $r$. Let $H=G_r(\{u,v\})$. Hence $V(H)=I_G(u,r) \cup I_G(v,r)$. Since $H$ is an isometric subgraph of $G$, there exists a shortest $u,v$-path $P$ in $G$ which is entirely contained in $H$. Denote $P:u=u_0,u_1,\ldots , u_{k-1},u_k=v$. As $P \subseteq V(H)$, $u_i \in I_G(u,r) \cup I_G(v,r)$, for any $i \in \{0,1,\ldots , k\}$. If $v \in I_G(u,r)$, then $(v,v,v)$ is a pseudo-median of $(u,v,r)$ of size 0 and the proof is completed. If $u \in I_G(v,r)$, then $(u,u,u)$ is a pseudo-median of $(u,v,r)$ of size 0 and again, the proof is  completed. Hence we may assume that $u \notin I_G(v,r)$ and $v \notin I_G(u,r)$. Let $j \in [k]_0$ be the largest index such that $u_j \in I_G(u,r)$. Hence $u_l \in I_G(v,r)$ for any $l \in \{j+1,\ldots ,k\}$. If $u_j \in I_G(v,r)$, then $u_j \in I_G(u,r) \cap I_G(v,r)$ and hence $(u_j,u_j,u_j)$ is a pseudo-median of $(u,v,r)$ of size 0. Next, we assume that $u_j \notin I_G(v,r)$. Since $u_{j+1} \notin I_G(u,r)$, $d_G(u_j,r)=d_G(u_{j+1},r)=l$. If $l=1$, then $(u_j,u_{j+1},r)$ is a pseudo-median of $(u,v,r)$ of size 1. If $l> 1$, then by the rooted triangle condition, there exists $x \in V(G)$ that is adjacent to $u_j$ and $u_{j+1}$ and $x \in I_G(r,u_j) \cap I_G(r,u_{j+1})$. Hence $(u_j,u_{j+1},x)$ is a pseudo-median of $(u,v,r)$ of size 1, which completes the proof. 
\end{proof}

From the proof of Theorem~\ref{th:necessary} we get the following.

\begin{cor}
Let $G$ be a rooted graph with the root $r$ such that $G$ satisfies the rooted triangle condition and let $\{u,v\} \subseteq V(G)$. If $H=G_r(\{u,v\})$ is isometric in $G$, then there exists a pseudo-median in $G$ of size 0 or 1 for the triple $u,v$ and $r$.
\end{cor}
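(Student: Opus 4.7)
The plan is to observe that the proof of Theorem~\ref{th:necessary} is already \emph{pair-local}: for a fixed pair $u,v$, the only hypothesis actually invoked (beyond the rooted triangle condition, which is a global assumption on $G$) is that a shortest $u,v$-path exists inside $H=G_r(\{u,v\})$, and that property follows from the isometry of this one daisy graph alone. The corollary's hypotheses supply exactly these ingredients for the given pair $\{u,v\}$, so the construction from the theorem can be re-run verbatim.

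Concretely, I would rehearse the same case analysis. Take a shortest $u,v$-path $P\colon u=u_0,u_1,\ldots,u_k=v$ that lies in $H$. If $v\in I_G(u,r)$, then $(v,v,v)$ is a pseudo-median of $(u,v,r)$ of size $0$, and symmetrically if $u\in I_G(v,r)$. Otherwise let $j$ be the largest index with $u_j\in I_G(u,r)$, forcing $u_{j+1},\ldots,u_k\in I_G(v,r)$. If also $u_j\in I_G(v,r)$, then $(u_j,u_j,u_j)$ is a size-$0$ pseudo-median. If not, then $d_G(u_j,r)=d_G(u_{j+1},r)=l$; when $l=1$ the triple $(u_j,u_{j+1},r)$ is a size-$1$ pseudo-median, while when $l>1$ the rooted triangle condition supplies a common neighbor $x$ of $u_j$ and $u_{j+1}$ with $d_G(x,r)=l-1$, so that $(u_j,u_{j+1},x)$ is a size-$1$ pseudo-median of $(u,v,r)$.

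There is no real obstacle: the corollary is essentially just the pair-local form of Theorem~\ref{th:necessary}. The only thing to verify is that the universal quantifier over pairs was never genuinely exploited in the original argument, which is clear because the proof fixed $u$ and $v$ at the outset and derived everything from the single daisy graph $G_r(\{u,v\})$ together with the rooted triangle condition. Hence one can simply reference that proof and read off the stronger pair-local statement asserted by the corollary.
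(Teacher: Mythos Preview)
Your proposal is correct and mirrors exactly what the paper does: it simply notes that the proof of Theorem~\ref{th:necessary} is pair-local, using only the isometry of the single daisy graph $G_r(\{u,v\})$ together with the rooted triangle condition, and hence yields the corollary verbatim. Your rehearsed case analysis is identical to the paper's original argument.
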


Proposition~\ref{medImpliesIsometric}, Theorem~\ref{th:sufficient} and Theorem~\ref{th:necessary} give the following characterization of rooted graphs $G$ with the root $r$ satisfying the rooted triangle condition, such that every daisy graphs of $G$ with respect to $r$ is isometric in $G$.

\begin{cor}\label{c:main}
Let $G$ be a rooted graph with the root $r$ such that $G$ satisfies the rooted triangle condition. Every daisy graph of $G$ with respect to $r$ is isometric in $G$, if and only if for any $u,v \in V(G)$ there exists a pseudo-median of size 0 or 1 of the triple of vertices $u, v$ and $r$.
\end{cor}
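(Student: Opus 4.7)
The plan is to deduce this corollary directly from the three results already established in the section, checking only that together they cover every case.

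For the forward implication, I would invoke Theorem~\ref{th:necessary} verbatim: the rooted triangle condition on $G$ together with the hypothesis that every daisy graph of $G$ with respect to $r$ is isometric in $G$ immediately yields, for every pair $u,v\in V(G)$, a pseudo-median of $(u,v,r)$ of size $0$ or $1$.

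For the reverse implication, I would fix an arbitrary daisy graph $H=G_r(X)$ and an arbitrary pair $u,v\in V(H)$, and select a pseudo-median $(x,y,z)$ of $(u,v,r)$ of size $0$ or $1$. If the size is $1$, the proof of Theorem~\ref{th:sufficient} applies pair-wise: $x\in I_G(u,r)\subseteq V(H)$ and $y\in I_G(v,r)\subseteq V(H)$, so $d_H(u,x)=d_G(u,x)$ and $d_H(y,v)=d_G(y,v)$, and concatenating the $u,x$- and $y,v$-shortest paths across the edge $xy$ gives a $u,v$-walk in $H$ of length $d_G(u,v)$. If the size is $0$, then $x=y=z$ lies in $I_G(u,r)\cap I_G(v,r)\cap I_G(u,v)$, hence in $V(H)$; the intervals $I_G(u,x)$ and $I_G(v,x)$ are also contained in $V(H)$, so the same concatenation argument (as in the proof of Proposition~\ref{medImpliesIsometric}) yields $d_H(u,v)\le d_G(u,v)$. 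Since the reverse inequality is automatic for any subgraph, $H$ is isometric.

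The only subtlety — and therefore the mild obstacle — is that the hypothesis allows different pairs $(u,v)$ to admit pseudo-medians of different sizes, so neither Proposition~\ref{medImpliesIsometric} nor Theorem~\ref{th:sufficient} can be invoked as a black box for all pairs simultaneously. Overcoming this requires only the observation that each of the two proofs is genuinely pair-wise, so splitting into the two subcases above suffices to handle the hybrid situation.
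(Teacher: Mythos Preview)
Your proposal is correct and mirrors the paper's own treatment: the paper states Corollary~\ref{c:main} without proof, simply citing Proposition~\ref{medImpliesIsometric}, Theorem~\ref{th:sufficient} and Theorem~\ref{th:necessary} as jointly yielding the characterization. Your explicit identification of the hybrid-case subtlety---that different pairs may have pseudo-medians of different sizes, so neither sufficiency result applies globally as a black box---and your resolution by noting that both proofs are genuinely pair-wise, is a useful clarification that the paper leaves implicit.
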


\begin{lemma}\label{l:TCHamming}
If $G$ is a Hamming graph, then $G$ satisfies the triangle condition.
\end{lemma}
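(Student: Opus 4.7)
The plan is to use the explicit coordinate description of Hamming graphs, where $d$ coincides with the Hamming distance, and construct the required vertex $x$ directly.

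I would start with three vertices $u,v,w$ of $H_{k_1,\ldots,k_n}$ with $d(v,w)=1$ and $d(u,v)=d(u,w)\ge 2$. Since $d(v,w)=1$, the vertices $v$ and $w$ differ in exactly one coordinate, say the $i$-th, so $v_i\neq w_i$ and $v_j=w_j$ for all $j\neq i$. The first key step is to show that $u_i,v_i,w_i$ are pairwise distinct. For this I would compare the sets of coordinates in which $u$ differs from $v$ and from $w$: outside coordinate $i$ these sets coincide, so the only way to have $d(u,v)=d(u,w)$ is that coordinate $i$ contributes equally to both distances, which forces $u_i\neq v_i$ and $u_i\neq w_i$.

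Next, I would define $x$ by setting $x_i=u_i$ and $x_j=v_j=w_j$ for all $j\neq i$. This $x$ is a valid vertex of $H_{k_1,\ldots,k_n}$ because its $i$-th coordinate lies in $[k_i]_0$. Since $x$ differs from $v$ only in coordinate $i$, and similarly differs from $w$ only in coordinate $i$, it is adjacent to both $v$ and $w$. Finally, comparing $x$ to $u$ coordinate by coordinate, they agree in coordinate $i$ (by construction) and in every other coordinate $j\neq i$ they agree exactly where $v$ and $u$ agree; hence $d(u,x)=d(u,v)-1$, as required.

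The verification is essentially routine arithmetic on the Hamming distance, so there is no real obstacle. The only place where one has to be a bit careful is the case analysis forcing $u_i\notin\{v_i,w_i\}$; ruling out the degenerate possibilities $u_i=v_i$ or $u_i=w_i$ (each of which would give $d(u,v)$ and $d(u,w)$ differing by exactly one) is the only substantive observation in the proof.
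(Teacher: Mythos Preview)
Your proof is correct and is essentially the same as the paper's: both identify the unique coordinate $i$ in which the two adjacent vertices differ, observe that the third vertex must differ from both at that coordinate (otherwise the two distances could not be equal), and then define $x$ by replacing the $i$-th coordinate of the adjacent pair with that of the third vertex. The only difference is notational---the paper swaps the roles of your $u$ and $w$ and phrases the distance conclusion as $x\in I_G(u,w)\cap I_G(v,w)$ rather than $d(u,x)=d(u,v)-1$.
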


\begin{proof}
Let $u=(u_1,\ldots, u_n)$ and $v=(v_1,\ldots, v_n)$ be two adjacent vertices of $G$ and $w=(w_1,\ldots, w_n)\in V(G)$ such that $d(u,w)=d(v,w)=k, k\geq 2.$ Since $uv\in E(G)$ there exists $i\in\{1,\ldots,n\}$ such that $v_i\not= u_i$ and $v_j=u_j$, for all $j\not= i$. Moreover, since $d(w,u)=d(w,v)=k$, it follows that $w_i\not=u_i$ and $w_i \not=v_i$. Let $x=(u_1, \ldots, u_{i-1}, w_i, u_{i+1}, \ldots, u_n)$. Clearly, $xu\in E(G)$ and $xv\in E(G)$ and $x\in I_G(u,w)\cap I_G(v,w)$. The assertion follows.
\end{proof}

Lemma~\ref{l:TCHamming} and Corollary~\ref{c:main} imply the following.

\begin{cor}
Let $G$ be a Hamming graph with the root $r$. Every daisy graph of $G$ with respect to $r$ is isometric in $G$, if and only if for any $u,v \in V(G)$ there exists a pseudo-median of size 0 or 1 for the triple $u,v$ and $r$.
\end{cor}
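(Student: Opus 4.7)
The plan is simply to combine the two stated results, Lemma~\ref{l:TCHamming} and Corollary~\ref{c:main}, after making the (short) observation that the triangle condition implies the rooted triangle condition for every choice of root.

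First I would verify that if a graph $G$ satisfies the triangle condition, then for every $r\in V(G)$ the rooted pair $(G,r)$ satisfies the rooted triangle condition. This is immediate from the definitions: given adjacent $v,w\in V(G)$ with $d(r,v)=d(r,w)\geq 2$, apply the triangle condition with $u:=r$. One obtains $x$ adjacent to both $v$ and $w$ with $d(x,r)=d(r,v)-1$, which is exactly the conclusion of the rooted triangle condition. So the rooted triangle condition is just the triangle condition specialized to $u=r$.

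Next, given any Hamming graph $G$ and any root $r\in V(G)$, Lemma~\ref{l:TCHamming} tells us that $G$ satisfies the triangle condition, hence by the observation above $(G,r)$ satisfies the rooted triangle condition. Corollary~\ref{c:main} then applies directly: every daisy graph of $G$ with respect to $r$ is isometric in $G$ if and only if for every $u,v\in V(G)$ the triple $(u,v,r)$ admits a pseudo-median of size $0$ or $1$. This is exactly the claimed biconditional.

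There is no real obstacle here; the only thing to be careful about is not to confuse the triangle condition (which has a generic apex vertex $u$) with the rooted triangle condition (whose apex is fixed to be $r$), and to explicitly note that the former implies the latter with $u=r$. Once that is written out, the statement is a one-line corollary of the two earlier results.
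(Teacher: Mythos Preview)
Your proposal is correct and follows exactly the approach the paper intends: the paper simply states that the corollary is implied by Lemma~\ref{l:TCHamming} and Corollary~\ref{c:main}, and your writeup supplies the one missing link (that the triangle condition trivially implies the rooted triangle condition for any choice of root). There is nothing to add.
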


The above results refer to rooted graphs $G$ for which all daisy graphs with respect to the root are isometric. Now we chose one daisy graph $H$ of $G$ with respect to the root of $G$ and study when is $H$ isometric in $G$.

Note that one can easily deduce from the proofs of Proposition~\ref{medImpliesIsometric} and Theorem~\ref{th:sufficient} that if $G$ is a rooted graph with the root $r$ and $H$ a daisy graph of $G$ with respect to $r$ such that for any  $u$ and $v$ in $H$, there exists a pseudo-median of size 0 or 1 of the triple of vertices $u$, $v$ and $r$, then $H$ is isometric in $G$. It is clear that the reverse statement is not necessarily true. For example, let $G$ be the cycle $C_6$ and $u$ and $r$ two antipodal vertices of $C_6=u,x_1,x_2,r,y_1,y_2,u$. Then $G_r(\{u\})$ is the whole graph $G$ and thus isometric in $G$, but there clearly exists a triple of vertices in $G$, for example $(x_1,y_1,r)$ having no pseudo-median of size 0 or 1 in $G$. 

\begin{prb}
Let $G$ be a rooted graph with the root $r$. Characterize daisy graphs of $G$ with respect to $r$ (generated by $X$) that are isometric in $G$. 
\end{prb}

Let $G$ be a rooted graph with the root $r$. For $X=\{v\} \subseteq V(G)$ the above problem is equivalent to the characterization of intervals $I_G(v,r)$ that are isometric in $G$. 

In the rest of this section we will consider Hamming graphs and study properties of isometric daisy subgraphs. Thus let $H=H_{k_1,\ldots, k_n}$ be a Hamming graph with the root $r=0^n$. Let $G=H_r(X)$ be a daisy graph of $H$ with respect to $r$ (generated by $X$). Note that if $|X|=1$, then $G$ is a daisy cube. Moreover, if $x=x_1\ldots x_n$ is the vertex of $X$, then $G \cong Q_n(\{y_1\ldots y_n\})$, where $y_i=\min{\{x_i,1\}}$, for any $i \in \{1, \ldots , n\}$. For $|X|=2$ we have the following characterization of isometric daisy graphs of a Hamming graph. 

\begin{theorem}
Let $H=H_{k_1,\ldots ,k_n}$ be a Hamming graph with the root $0^n$ and  let $G=H_{0^n}(X)$ be a daisy graph of  $H$ generated by the set $X=\{x,y\}$ of cardinality 2. Then $G$ is an isometric subgraph of $H$ if and only if there exists a pseudo-median of $(x,y,0^n)$ of size 0 or 1 in $G$.
\end{theorem}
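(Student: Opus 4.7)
The plan is to prove the two directions separately, each reducing to a previous result.

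For the ($\Rightarrow$) direction, I first note that Lemma~\ref{l:TCHamming} gives the triangle condition for $H$, and specializing to the case where one of the three vertices is the root $0^n$ yields the rooted triangle condition for $(H,0^n)$. The corollary following Theorem~\ref{th:necessary}, applied with ambient graph $H$, root $0^n$ and pair $\{x,y\}$, then states directly that if $G = H_{0^n}(\{x,y\})$ is isometric in $H$, the triple $(x,y,0^n)$ admits a pseudo-median of size $0$ or $1$.

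For the ($\Leftarrow$) direction, my plan is to promote the hypothesis from the single pair $(x,y)$ to every pair $u,v\in V(G)$ and then invoke the remark following Corollary~\ref{c:main}. The key structural input is that in a Hamming graph the unique quasi-median of a triple is obtained coordinate by coordinate via the majority rule, and its size equals the number of coordinates in which the three entries are pairwise distinct. Setting
\[ D = \{i \in [n] : x_i \ne 0,\ y_i \ne 0,\ x_i \ne y_i\}, \]
the quasi-median of $(x,y,0^n)$ has size $|D|$, so the hypothesis forces $|D|\le 1$. For arbitrary $u,v\in V(G) = I_H(x,0^n)\cup I_H(y,0^n)$, the interesting case is $u\in I_H(x,0^n)\setminus I_H(y,0^n)$ and $v\in I_H(y,0^n)\setminus I_H(x,0^n)$ (the remaining cases give a quasi-median of size $0$ immediately, since either $u_i$ or $v_i$ is forced to $0$ or to $x_i=y_i$ on the offending coordinates). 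Because $u_i\in\{0,x_i\}$ and $v_i\in\{0,y_i\}$, a coordinate $i$ at which $u_i$, $v_i$ and $0$ are pairwise distinct must satisfy $u_i = x_i \ne 0$, $v_i = y_i \ne 0$ and $x_i\ne y_i$, which forces $i\in D$. Hence the quasi-median of $(u,v,0^n)$ in $H$ has size at most $|D|\le 1$, and the three coordinates of the quasi-median belong to $I_H(u,0^n)\cup I_H(v,0^n)\subseteq V(G)$. The remark following Corollary~\ref{c:main} then gives that $G$ is isometric in $H$.

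The main obstacle is the coordinate-wise size bound in the $(\Leftarrow)$ direction that transfers the pseudo-median hypothesis from the single pair $(x,y)$ to every pair in $V(G)$; once the invariant set $D$ is identified, the computation is short, and everything else is a direct appeal to Lemma~\ref{l:TCHamming}, the corollary after Theorem~\ref{th:necessary} and the remark after Corollary~\ref{c:main}.
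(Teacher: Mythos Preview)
Your argument is correct. The $(\Rightarrow)$ direction coincides with the paper's: both invoke Lemma~\ref{l:TCHamming} to obtain the rooted triangle condition for $(H,0^n)$ and then appeal to the reasoning behind Theorem~\ref{th:necessary} (equivalently, its corollary) applied to the pair $\{x,y\}$.

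For $(\Leftarrow)$ the two approaches genuinely diverge. After extracting $|D|\le 1$ from the hypothesis (which the paper does as well), the paper fixes a convenient ordering of the coordinates and, for arbitrary $u,v\in V(G)$, builds an explicit $u,v$-path in $G$ of length $H(u,v)$: first lower to $0$ every coordinate where $u$ is nonzero and $v$ is zero, then (if $D\ne\emptyset$) switch the single coordinate in $D$ from $u_i$ to $v_i$, then raise from $0$ every coordinate where $v$ is nonzero and $u$ is zero; one checks along the way that every intermediate vertex stays in $I_H(x,0^n)\cup I_H(y,0^n)$. You instead promote the hypothesis from the generating pair $(x,y)$ to \emph{every} pair $u,v\in V(G)$ via the coordinate inclusion $\{i:u_i,v_i,0\text{ pairwise distinct}\}\subseteq D$, and then invoke the general sufficiency observation recorded after Corollary~\ref{c:main}. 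Your route is shorter and more conceptual---it shows that the small pseudo-median for $(x,y,0^n)$ is inherited by all pairs in $V(G)$, which is exactly why the explicit path construction succeeds---while the paper's route is self-contained and constructive. One minor remark: your verification that the three vertices of the quasi-median lie in $I_H(u,0^n)\cup I_H(v,0^n)\subseteq V(G)$ is correct but not needed for the invocation, since the remark you cite only asks for a pseudo-median in the ambient graph $H$, and the proof of Theorem~\ref{th:sufficient} already forces the relevant two vertices into the daisy graph automatically.
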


\begin{proof}
Let $G=H_{0^n}(\{x,y\})$. Denote $x=x_1\ldots x_n$, $y=y_1\ldots y_n$ and $r=0^n=r_1\ldots r_n$.

Suppose first, $G$ is an isometric subgraph of $H$. By Lemma~\ref{l:TCHamming} $H$ satisfies the triangle condition and consequently also the rooted triangle condition. Using the same line of thought as in the proof of Theorem~\ref{th:necessary} one can easily check that there exists a pseudo-median of $(x,y,0^n)$ of size 0 or 1 in $G$.

For the converse suppose that there is a pseudo-median of size 0 or 1 of $(x,y, 0^n)$ in $G$. Since the size of the pseudo-median in a Hamming graph is the number of coordinates in which $x,y$ and $r$ are all distinct, there is at most one coordinate in which $x,y$ and $r$ are all pairwise distinct. To simplify, permute factors of $H$ such that $x$ has the first $i-1$ coordinates equal to 0 and all other coordinates different from 0 (i.e. $i-1$ is the number of coordinates of $x$ that are equal to 0), and if there exists a coordinate in which $x,y$ and $r$ are pairwise distinct, let this be the $i^\text{th}$ coordinate. Since $(x,y,r)$ has a pseudo-median of size 0 or 1, $y_j\in \{x_j,0\}$, for any valid index $j > i$.

Let $u=u_1\ldots u_n$ and $v=v_1\ldots v_n$ be two arbitrary vertices of $G$. Note, $V(G)=I_H(x,0^n) \cup I_H(y,0^n)$. We will prove that there exists $u,v$-path in $G$ with $d_G(u,v)=H(u,v)=d_H(u,v)$.

Suppose first that $u,v \in I_H(x,0^n)$ (the case when $u,v \in I_H(y,0^n)$ is proved in a similar way). Then $u_j=v_j=0$, for any $j < i$, and for any $j \geq i$, it holds that $u_j \in \{x_j,0\}$ and $v_j \in \{x_j,0\}$. We construct $u,v$-path of length $H(u,v)$ in $G$ in the following way.  Start in $u$ and continue with $u^{(1)}$ which is obtained from $u$ by replacing the first coordinate of $u$, say $u_j$, in which $u$ and $v$ differ, by $v_j$. Since $v_j \neq u_j$ and $u,v \in I_H(x,0^n)$, $\{v_j,u_j\}=\{x_j,0\}$ and consequently $u^{(1)} \in I_H(x,0^n) \subseteq V(G)$. We continue in the same way step by step, such that at the step $k$ we replace the first coordinate of $u^{(k)}$, say $u^{(k)}_j$, in which $u^{(k)}$ and $v$ differ, by $v_j$. Since all the vertices $u^{(k)}$, for any valid $k$, are contained in $V(G)$ and the constructed path $P$ is of length $H(u,v)$, $P$ is an $u,v$-path of $G$ of length $d_H(u,v)$.  

Finally let $u \in I_H(x,0^n), v \in I_H(y,0^n)\setminus I_H(x,0^n)$. 

Let $I_D$ be the set of indices in which $u$ and $v$ differ. We will also use the following sets. The set $I_M=\{i' \in I_D\ |\ u_{i'}\neq 0 \land v_{i'}\neq 0\}$, this is an empty set, if $(x,y,r)$ has a pseudo-median of size 0, otherwise it contains the index $i$. Let $I_u = \{ i' \in I_D \ |\ u_{i'} = 0\}$ and $I_v = \{ i' \in I_D \ |\ v_{i'} = 0\}$. Note that $I_M, I_u$ and $I_v$ form a partition of $I_D$.

We construct a $u,v$-path in the following way. The first part of the path is constructed by using all the indices from the set $I_v = \{ i_1, i_2, \ldots, i_{|I_v|}\}$. Let $u^{(0)} = u$ be the first vertex of this path. The next vertex of the path, $u^{(1)}$, is obtained from $u^{(0)}$ by replacing the coordinate $u^{(0)}_{i_1}$ with 0. The vertex $u^{(2)}$, is obtained from $u^{(1)}$ by replacing the coordinate $u^{(1)}_{i_2}$ with 0. Assume we have already obtained the vertex $u^{(j)}$, then we obtain the vertex $u^{(j+1)}$ from $u^{(j)}$ by replacing the coordinate $u^{(j)}_{i_{j+1}}$ with zero. We do this for every index in $I_v$, so the last vertex we obtain is $u^{(|I_v|)}$. It is easy to see, that these vertices indeed form a path (two consecutive vertices differ in exactly one coordinate). Since we only change coordinates to 0, it is also clear that every vertex constructed so far belongs to $I_H(u, 0^n) \subseteq I_H(x, 0^n) \subseteq V(G)$.

If $I_M$ is not an empty set, we form the next vertex in our path, say $v^{(0)}$, from $u^{(|I_v|)}$ by replacing the coordinate $u^{(|I_v|)}_i$ to $v_i$. Again, since $v^{(0)}$ and $v$ differ only in indices of the set $I_u$ and the values of coordinates at those indices in $v^{(0)}$ is 0, it is clear that $v^{(0)} \in I_H(v, 0^n) \subseteq I_H(y, 0^n) \subseteq V(G)$. If $I_M$ is an empty set, we denote the vertex $u^{(|I_v|)}$ by $v^{(0)}$.

We continue with the construction of our $u,v$-path by using all the indices from the set $I_u = \{ j_1, j_2, \ldots, j_{|I_u|}\}$. The next vertex of the path, $v^{(1)}$, is obtained from $v^{(0)}$ by replacing the coordinate $v^{(0)}_{j_1}$ with $v_{j_1}$. The vertex $v^{(2)}$, is obtained from $v^{(1)}$ by replacing the coordinate $v^{(1)}_{j_2}$ with $v_{j_2}$. Assume we have already obtained the vertex $v^{(k)}$, then we obtain the vertex $v^{(k+1)}$ from $v^{(k)}$ by replacing the coordinate $v^{(k)}_{j_{k+1}}$ with $v_{j_{k+1}}$. We do this for every index in $I_u$, so the last vertex we obtain is $v^{(|I_u|)}$. It is easy to see, that these vertices indeed form a path (two consecutive vertices differ in exactly one coordinate). Since we only change coordinates, say at index $j'$, from 0 to $v_{j'}$, it is also clear that every vertex constructed in this part of the path belongs to $I_H(v, 0^n) \subseteq I_H(y, 0^n) \subseteq V(G)$. Note, that the vertex $v^{(|I_u|)}$ is actually the vertex $v$. The fact, that the sets $I_M, I_u$ and $I_v$ form a partition of $I_D$ implies that the length of the constructed path is $H(u,v)$. This concludes our proof.
\end{proof}

\section{Characterization of isometric daisy Hamming graphs}\label{sec:dhg}
Let $G'$ be a daisy graph of a Hamming graph $H'=H_{k_1,\ldots , k_{n-1}}$ with respect to $0^{n-1}$. Let $G$ be a peripheral expansion of $G'$ relative to $W_0'=V(G'),W_1',\ldots , W_k'$. If for any $i\in \{1,\ldots ,k\}$, the graph $\langle W_i' \rangle_{H'}$ is a daisy graph of $H'$ with respect to $0^{n-1}$, then the peripheral expansion pe$(G';W_0',\ldots, W_k')$ is called {\em daisy peripheral} expansion of $G'$ relative to  $W_0',\ldots, W_k'$.

In this section we prove that isometric daisy graphs of a Hamming graph are precisely the graphs that can be obtained from $K_1$ by a sequence of daisy peripheral expansions. 

\begin{theorem}\label{p:dexpIsD}
Let $H=H_{k_1,\ldots, k_n}$ be a Hamming graph with the root $0^n$. If $G$ is an isometric daisy graph of $H$ with respect to the root $0^n$, then the daisy peripheral expansion of $G$ relative to the sets $V(G)=W_0,\ldots , W_l$, is an isometric daisy graph of $H'=K_{l+1} \Box H$ with respect to $0^{n+1}$.
\end{theorem}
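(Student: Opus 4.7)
My plan is to realize the daisy peripheral expansion $\widetilde G := \mathrm{pe}(G; W_0,\ldots, W_l)$ as an induced subgraph of $H' = K_{l+1}\Box H$ by identifying each copy $x_i$ (for $x\in W_i$, under the convention $W_0 = V(G)$) with the pair $(i,x)\in V(K_{l+1})\times V(H)$. Under this identification $V(\widetilde G) = \{0\}\times V(G)\cup \bigcup_{i=1}^{l}\{i\}\times W_i$, and a direct check verifies that the two construction rules of Definition~\ref{def:exp} produce precisely the edges of $H'$ between vertices of $V(\widetilde G)$: the clique-replacement rule gives the edges $(i,x)(j,x)$ for $x\in W_i\cap W_j$, and the copy rule gives $(i,x)(i,y)$ exactly when $xy\in E(G)$ and $x,y\in W_i$, which since $G$ is induced in $H$ coincides with the edges of $H'$ within each level.

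To see that $\widetilde G$ is a daisy graph of $H'$ with respect to $0^{n+1}$, I would exploit the two available daisy structures: since $G$ is a daisy graph of $H$ there is $X_G$ with $V(G) = \bigcup_{v\in X_G} I_H(v,0^n)$, and by the hypothesis of daisy peripheral expansion each $\langle W_i\rangle_H$ for $i\geq 1$ is a daisy graph of $H$, say with a generating set $X_i$ satisfying $W_i = \bigcup_{v\in X_i} I_H(v,0^n)$. Using $I_{H'}((i,v),0^{n+1}) = I_{K_{l+1}}(i,0)\times I_H(v,0^n)$, one checks that $X' := \{(0,v) : v\in X_G\}\cup \bigcup_{i=1}^{l} \{(i,v) : v\in X_i\}$ generates $\widetilde G$ as a daisy graph of $H'$ with respect to $0^{n+1}$, the inclusion $\bigcup_{i\geq 1} W_i\subseteq V(G)$ absorbing the level-$0$ shadow of the upper-level intervals into $\{0\}\times V(G)$.

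Finally I verify the isometry of $\widetilde G$ in $H'$. For any $(i,x),(j,y)\in V(\widetilde G)$ the target distance $d_{H'}((i,x),(j,y))$ equals $d_H(x,y)$ when $i=j$ and $d_H(x,y)+1$ otherwise, and since $\widetilde G\subseteq H'$ it suffices to exhibit a path in $\widetilde G$ matching this length. When $i=j$, axiom~4 of Definition~\ref{def:exp} together with the isometry of $G$ in $H$ gives that $\langle W_i\rangle_H$ is isometric in $H$, so a shortest $x,y$-path inside $\langle W_i\rangle_H$ lifts to level $i$ in $\widetilde G$. The main obstacle is the case $i\neq j$ with $i,j\geq 1$, where I need a bridge vertex $z\in W_i\cap W_j$ lying on a shortest $x,y$-path in $H$: this then yields a path in $\widetilde G$ of length $d_H(x,z)+1+d_H(z,y) = d_H(x,y)+1$, built from a shortest $x,z$-path inside $\langle W_i\rangle_H$ lifted to level $i$, the clique edge $(i,z)(j,z)$, and a shortest $z,y$-path inside $\langle W_j\rangle_H$ lifted to level $j$. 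The existence of $z$ follows by taking any shortest $x,y$-path inside $\langle W_i\cup W_j\rangle_H$ (which is isometric in $H$ by axiom~4) and invoking axiom~3: the path cannot cross directly from $W_i\setminus W_j$ to $W_j\setminus W_i$, so it must visit $W_i\cap W_j$, which completes the argument.
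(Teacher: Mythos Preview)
Your proof is correct and follows essentially the same three-step structure as the paper's: identify $\widetilde G$ with an induced subgraph of $H'$ via the labelling $x_i\mapsto (i,x)$, verify the daisy property, and establish isometry by finding a bridge vertex in $W_i\cap W_j$ using axioms~3 and~4 of Definition~\ref{def:exp}. The only cosmetic differences are that you exhibit an explicit generating set $X'$ for the daisy property (the paper checks interval-closure directly) and that, in the $i\neq j$ isometry case, you reroute the two halves of the path separately through $\langle W_i\rangle$ and $\langle W_j\rangle$ rather than lifting the original $\langle W_i\cup W_j\rangle$-geodesic, which is in fact a bit cleaner.
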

\begin{proof}
Let $G'$ be the peripheral expansion of $G$ relative to $W_0,W_1,\ldots , W_l$. Therefore, $G'$ consists of a disjoint union of a copy of $G=\left\langle W_0 \right\rangle$ and a copy of $\left\langle W_i \right\rangle$, for any $i\in \{1,\ldots , l\}$. We define the labels of the vertices of $G'$ as follows. Prepend $i$ to each vertex of $G'$ corresponding to the copy of $\left\langle W_i \right\rangle$, for all $i\in \{0,\ldots, l\}$. Hence the labels of the vertices of $G'$ are vectors of length $n+1$ and the first coordinate is an integer from $\{0,\ldots , l\}$.

First, we prove that two vertices of $G'$ are adjacent if and only if the corresponding vectors differ in exactly one position. Since $G'$ is the expansion of $G$ relative to $W_0,\ldots ,W_l$, it follows from the Definition \ref{def:exp} (definition of expansion) that two vertices $u'=u_1\ldots u_n u_{n+1}$ and $v'=v_1\ldots v_n v_{n+1}$ of $G'$ are adjacent in $G'$ if and only if $u=u_2\ldots u_{n+1}$ and $v=v_2\ldots v_{n+1}$ are adjacent in $G$ and both belong to the same set $W_i$, or if $u=u_2\ldots u_{n+1}=v=v_2\ldots v_{n+1}$ and $u$ belongs to two different sets $W_{u_1}$ and $W_{v_1}$. The last condition directly implies that $u'$ and $v'$ differ in exactly one coordinate, namely the first coordinate. If $u=u_2\ldots u_{n+1}$ and $v=v_2\ldots v_{n+1}$ are adjacent in $G$ and contained in the same set $W_i$, then $u$ and $v$ differ in exactly one coordinate. But then, since they are both in $W_i$, $u_1=v_1=i$ and hence $u'$ and $v'$ differ in exactly one coordinate. Hence $G'$ is an induced subgraph of $H'=K_{l+1} \Box H$.

In the second step we prove that $G'$ is a daisy graph of $H'$ with respect to $0^{n+1}$. Let $v'=v_0 v_1\ldots v_n \in V(G')$ and let $x'=x_0\ldots x_n \in I_{H'}(v',0^{n+1})$. Hence $x_i \in \{0,v_i\}$, for any $i \in \{0,\ldots , n\}$. Since $v'=v_0 v_1\ldots v_n$, it follows that $v=v_1\ldots v_n \in W_{v_0}$. We know that the graph $\left\langle W_{v_0} \right\rangle$ is a daisy graph of $H$ with respect to $0^n$ and $x=x_1\ldots x_n \in I_H(v,0^n)$, therefore $x \in V(\left\langle W_{v_0} \right\rangle)$. Hence if  $x'=0x_1\ldots x_n$, then $x'$ is in the copy of $G$ in $G'$. If $x'=v_0 x_1\ldots x_n$, then $x'$ is in the copy of $\left\langle W_{v_0} \right\rangle$ in $G'$. In both cases we deduce that $x' \in V(G')$, which completes this part of the proof.

It remains to prove that $G'$ is an isometric subgraph of $H'$. Let $u'=u_0\ldots u_n$ and $v'=v_0\ldots v_n$ be two arbitrary vertices of $G'$. 

If $u_0=v_0$, then $u=u_1\ldots u_n \in W_{u_0}$ and $v=v_1\ldots v_n \in W_{u_0}$. Since $G'$ is an expansion of $G$, relative to $W_0,\ldots , W_l$, the definition of expansion (Definition \ref{def:exp}) implies that $\left\langle W_{u_0} \right\rangle$ is isometric in $G$. As $G$ is isometric in $H$,
$$d_{\left\langle W_{u_0} \right\rangle}(u,v)=d_G(u,v)=d_H(u,v)=H(u,v).$$
Hence $$d_{G'}(u',v')=d_{\left\langle W_{u_0} \right\rangle}(u,v)=H(u,v)=H(u',v')=d_{H'}(u',v'),$$ where the penultimate equality holds because $u_0=v_0$.

Finally, consider the case where $u_0\neq v_0$. Hence $u=u_1\ldots u_n \in W_{u_0}$ and $v=v_1\ldots v_n \in W_{v_0}$. Since $\left\langle W_{u_0} \cup W_{v_0} \right\rangle$ is isometric in $G$ (by the definition of expansion), there exists a shortest $u,v$-path $P:u=u^0,u^1,\ldots ,u^k=v$ in $G$ (note that each $u^i$ is a vertex in $G$ and hence has the form $u^i=u^i_1\ldots u^i_n$ ) which is entirely contained in $\left\langle W_{u_0} \cup W_{v_0} \right\rangle.$ Since $G$ is isometric in $H$, we get
$$d_{\left\langle W_{u_0} \cup W_{v_0} \right\rangle}(u,v)=d_G(u,v)=d_H(u,v)=H(u,v).$$
Let $i \in \{0,\ldots ,k\}$ be the smallest index such that $u^i \in W_{v_0}$. Since there are no edges between $W_{u_{0}}\setminus W_{v_0}$ and $W_{v_{0}}\setminus W_{u_0}$, $u^i \in W_{u_0}$.  Then the path $u'=u^{0'},u^{1'},\ldots ,u^{i'},v^{i'},\ldots ,v^{k'}=v'$, where $u^{l'}={u_0}u^l$, for any $l\in \{0,\ldots, i\}$ and $v^{l'}={v_0}u^l$, for any $l \in \{i,\ldots , k\}$, is an $u',v'$-path in $G'$. Hence $$d_{G'}(u',v') \leq d_G(u,v)+1=H(u,v)+1=H(u',v')=d_{H'}(u',v'),$$ where the penultimate equality holds because $u_0 \neq v_0$. Since $G'$ is a subgraph of $H'$, the assertion  follows.
\end{proof}

Let $G$ be an isometric daisy graph of a Hamming graph $H=H_{k_1,\ldots , k_n}$ with respect to $0^n$, where $H$ is the smallest possible. We introduce the following terminology which will be used throughout this section. For any $j\in [n]$ we define the sets:
\begin{align*}
    & W_i^j =\{u=u_1\ldots u_n \in V(G)\ |\  u_j=i\}, \text{ for any } i \in [k_j]_0 \\  
    & U_i^j=\{ x \in W_i^j\ |\  \exists y \in W_0^j \land xy \in E(G)  \}, \text{ for any } i\in [k_j]_0 \\  
    & U_{0i}^j=\{ x \in W_0^j\ |\  \exists y \in W_i^j \land xy \in E(G) \}, \text{ for any } i\in \{1, \ldots, k_j-1\} \\
    & U_0^j= \bigcup_{i=1}^{k_j-1}U_{0i}^j. 
\end{align*}
Also, for any $j\in [n]$ and any $i \in [k_j]_0$ denote by $e^j_i$ the vertex of the Hamming graph $H$ labeled by $0^{j-1} i 0^{n-j}$.

\begin{lemma}\label{l:properties of W_ab}
Let $G$ be an isometric daisy graph of a Hamming graph $H=H_{k_1,\ldots , k_n}$ with respect to $0^n$, where $H$ is the smallest possible. For any $j\in [n]$ and any $i\in [k_j]_0$, if $W_i^j \neq \emptyset$, then there exists $uv \in E(G)$ such that $W_i^j = W_{uv}$. 
\end{lemma}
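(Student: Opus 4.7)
The plan is to exploit the isometry of $G$ in $H$ together with the structure of Hamming distance to compute Djokovi\'{c} half-spaces in $G$ explicitly, and then to exhibit, for each non-empty $W_i^j$, a concrete edge of $G$ that realizes it. Two preliminary observations drive the argument. First, every daisy graph is an induced subgraph of its ambient Hamming graph, so edges of $G$ are precisely the edges of $H$ whose endpoints both lie in $V(G)$. Second, if $v \in V(G)$ then $I_H(v,0^n) \subseteq V(G)$, which is built into the definition of a daisy graph.

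The first key step is to compute $W_{uv}$ for a general edge $uv \in E(G)$. Such an edge differs in exactly one coordinate, say coordinate $j$ with $u_j=a$ and $v_j=b$, $a \neq b$. Using the formula for Hamming distance, for any $x \in V(G)$ one checks that $d_H(u,x) < d_H(v,x)$ iff $x_j = a$, that $d_H(v,x) < d_H(u,x)$ iff $x_j = b$, and that equality holds otherwise. Since $G$ is isometric in $H$, these comparisons transfer verbatim to $G$, yielding $W_{uv} = \{x \in V(G) : x_j = a\} = W_a^j$. Thus the lemma reduces to producing, for each $j$ and each non-empty $W_i^j$, an edge $uv \in E(G)$ with $u_j = i$ and $v_j \neq i$.

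For $i \neq 0$, pick $v \in W_i^j$ (supplied by the hypothesis) and let $v'$ be $v$ with its $j$-th coordinate replaced by $0$. Then $v' \in I_H(v,0^n) \subseteq V(G)$; since $vv' \in E(H)$ and $G$ is induced, $vv' \in E(G)$, and the computation above gives $W_{vv'} = W_i^j$. For $i=0$, the same construction works once one finds a starting vertex $w \in V(G)$ with $w_j \neq 0$. Here the minimality of $H$ is invoked: if no such $w$ existed, every vertex of $G$ would have $j$-th coordinate $0$, and one could delete a nonzero value from the $j$-th factor (keeping the root $0^n$) to embed $G$ isometrically into a strictly smaller Hamming graph while preserving the daisy structure, contradicting minimality. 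Setting $w'$ to be $w$ with $j$-th coordinate zeroed then gives $w'w \in E(G)$ with $W_{w'w} = W_0^j$.

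The main obstacle I foresee is making precise what ``smallest possible $H$'' buys. The formulation I would adopt is that in the minimal $H$, every value $c \in [k_j]_0$ is realized as the $j$-th coordinate of some vertex of $G$, since otherwise a factor could be shrunk without breaking either isometry or the daisy property (the root being fixed at $0^n$). This minimality principle is precisely what drives the $i = 0$ case; the $i \neq 0$ cases are cleaner because the hypothesis $W_i^j \neq \emptyset$ directly supplies the witnessing vertex.
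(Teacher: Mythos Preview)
Your proof is correct and takes essentially the same approach as the paper: produce an edge of $G$ joining a vertex with $j$-th coordinate $i$ to one with $j$-th coordinate $0$, and use isometry of $G$ in $H$ to identify the Djokovi\'c half-space $W_{uv}$ with $W_i^j$. The paper uses the specific witness edge $0^n e_i^j$ (where $e_i^j = 0^{j-1}i0^{n-j}$) rather than your arbitrary witness, and is silent on the case $i=0$, which you handle more carefully via the minimality of $H$.
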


\begin{proof}
Let $j\in [n]$ and $i\in [k_j]_0$ be arbitrary, with $W_i^j \not= \emptyset$, and $x=x_1\ldots x_n \in W_{i}^j$. Hence $x_j=i$. Since $G$ is a daisy graph of $H$ with respect to $0^n$ and $x'=0^{j-1}i0^{n-j} \in I_H(0^n,x)$ it follows that $x'\in V(G)$. Since $x'_j=i$, $x'\in W_i^j$. Then $W_{x'0^n}$ contains exactly all the vertices of $G$, that are closer to $x'$ than $0^n$, i.e. all vertices of $G$ with $j$-th coordinate equal to $i$. Hence $W_{x'0^n}=W_i^j$.
\end{proof}

For the edge $uv$ of a partial Hamming graph, the sets $W_{uv}$ have many nice properties~\cite{Bresar2001,chepoi-88,Wilkeit1990}. Since our graph $G$ is a partial Hamming graph, it follows from Lemma~\ref{l:properties of W_ab} that the sets $W_i^j$ also have these properties.

\begin{lemma}\label{l:0inTriangleClass}
Let $G$ be an isometric daisy graph of a Hamming graph $H=H_{k_1,\ldots , k_n}$ with respect to $0^n$, where $H$ is the smallest possible. For any $\bigtriangleup$-class $F$ of $G$, there exists an edge $f\in F$ with $0^n$ as an endpoint. 
\end{lemma}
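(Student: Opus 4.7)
The plan is to identify the unique coordinate of $H$ associated with the $\bigtriangleup$-class $F$, and then exhibit a canonical edge incident to $0^n$ in that coordinate. Since $G$ is isometric (hence induced) in the Hamming graph $H$, every clique of $G$ is a clique of $H$, and every clique of $H$ sits inside a single $K_{k_j}$-factor, so all its edges differ in one and the same coordinate. Combined with the criterion recalled in the preliminaries (two $\sim$-classes lie in the same $\bigtriangleup$-class iff some clique contains representatives of both) and the transitivity of $\bigtriangleup$ on partial Hamming graphs due to Bre\v sar~\cite{Bresar2001}, this would let me conclude that all edges of $F$ differ in a single coordinate $j \in [n]$.

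With $j$ so fixed, I would pick any edge $uv \in F$. Then $u$ and $v$ agree on every coordinate except $j$, and because $u_j \neq v_j$ at least one of these values is nonzero; without loss of generality assume $u_j \neq 0$. The natural candidate for the sought edge is $0^n e^j_{u_j}$, where $e^j_{u_j} = 0^{j-1} u_j 0^{n-j}$ as defined just before the lemma. Since each coordinate of $e^j_{u_j}$ is either $0$ or agrees with the corresponding coordinate of $u$, one has $e^j_{u_j} \in I_H(u, 0^n)$, and the daisy graph property (together with $u \in V(G)$) forces $e^j_{u_j} \in V(G)$.

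Finally, $0^n \in V(G)$, the pair $\{0^n, e^j_{u_j}\}$ is an edge of $H$ (they differ only in coordinate $j$), and since $G$ is isometric in $H$ every $H$-edge between vertices of $G$ is an edge of $G$; thus $0^n e^j_{u_j} \in E(G)$. As this edge differs in coordinate $j$, it sits in the same $\bigtriangleup$-class as $uv$, i.e.\ in $F$, completing the argument.

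I do not anticipate a serious obstacle. The only step that requires some care is the first one---pinning down that each $\bigtriangleup$-class in a partial Hamming graph corresponds to exactly one coordinate direction of the ambient Hamming graph---but this is essentially Bre\v sar's structural observation and can be invoked rather than reproved. Once that is granted, the remainder is a short unwinding of the daisy property and of the definition of an induced isometric subgraph.
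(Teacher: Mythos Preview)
Your overall strategy is sound, but the last inference has a gap. You carefully argue that every edge in a single $\bigtriangleup$-class of a partial Hamming graph differs in one and the same coordinate $j$; this is correct. However, your concluding sentence (``As this edge differs in coordinate $j$, it sits in the same $\bigtriangleup$-class as $uv$'') needs the \emph{converse}: that every edge of $G$ whose endpoints differ in coordinate $j$ lies in $F$. Nothing you wrote establishes this direction. It is true---for the \emph{minimal} Hamming host $H$ the $\bigtriangleup$-classes are in bijection with the coordinate factors---and it can be extracted from Bre\v sar's expansion theory, but you should state and cite that fact explicitly rather than let it slip by. Note also that within the paper's own logical flow this bijection is obtained only \emph{after} the present lemma (as Corollary~\ref{c:TiangleClasses}), so if you stay inside the paper you would be arguing circularly; as a stand-alone proof citing~\cite{Bresar2001} for the bijection it is fine.

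By contrast, the paper does not invoke the coordinate/$\bigtriangleup$-class bijection at all. It takes an edge $uv\in F$ with $u_i\neq v_i$ and argues directly from the definitions of $\sim$ and $\bigtriangleup$: if one of $u_i,v_i$ equals $0$ (say $u_i=0$), then $0^n\in W_{uv}$ and $e^i_{v_i}\in W_{vu}$, so $0^n e^i_{v_i}\sim uv$; if neither equals $0$, one first passes through the triangle $\{u,v,x\}$ with $x$ obtained from $u$ by zeroing the $i$th coordinate (so $x\in I_H(u,0^n)\subseteq V(G)$), obtaining $uv\bigtriangleup vx$, and then applies the first case to $vx$. This is more elementary and entirely self-contained, at the cost of a short case split; your route is shorter once the heavier structural input is granted.
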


\begin{proof}
Let $F$ be an arbitrary $\bigtriangleup$-class of $G$ and $uv \in F$, where $u=u_1 \ldots u_n$ and $v=v_1 \ldots v_n$. Hence, $u_i \not= v_i$, for some $i \in [n]$, and $u_j=v_j$, for any $j \in [n]\setminus \{i\}$. 

First, suppose that one of $u_i$ and $v_i$ equals $0$, say $u_i$. It follows that $0^n \in W_{uv}$. Since $e^i_{v_i} \in I_H(v, 0^n)$ and $G$ is a daisy graph of $H$ with respect to $0^n$, it follows that $e^i_{v_i} \in V(G)$. Since the $i^\text{th}$ coordinate of $e^i_{v_i}$ is $v_i$, the vertex $e^i_{v_i} \in W_{vu}$. Hence, $0^n e^i_{v_i} \sim uv$ and therefore $0^n e^i_{v_i} \in F$.

Finally, suppose neither $u_i$ nor $v_i$ equals $0$. Since $x=u_1 \ldots u_{i-1} 0 u_{i+1} \ldots u_n \in I_H(u,0^n)$ and $G$ is a daisy graph of $H$ with respect to $0^n$, the vertex $x \in V(G)$. Note that $u,v \text{ and } x$ induce $K_3$ in $G$. Hence, $vx \bigtriangleup uv$ and consequently the edge $vx$ belongs to $F$. Now, consider the vertex $e^i_{v_i}$, which belongs to $I_H(v,0^n)$ and therefore is a vertex of $G$. Similarly to the first case, we deduce that $e^i_{v_i} \in W_{vx}$. Clearly, $0^n \in W_{xv}$ and $0^n e^i_{v_i}$ is an edge of $G$. It follows that $0^n e^i_{v_i} \sim xv$ and therefore $0^n e^i_{v_i} \in F$.
\end{proof}

From the definition of the relation $\bigtriangleup$ it follows that the $\bigtriangleup$-class $F_j$ generated by the edge $0^n e^j_i$, for some $i\not = 0$, contains exactly all edges between $U_k^j$ and $U_l^j$, for any $0 \leq k < l \leq k_j-1$. Thus using Lemma~\ref{l:0inTriangleClass} we deduce the following. 

\begin{cor}\label{c:TiangleClasses} Let $G$ be an isometric daisy graph of a Hamming graph $H=H_{k_1,\ldots , k_n}$ with respect to $0^n$, where $H$ is the smallest possible. There are exactly $n$ $\bigtriangleup$-classes $F_1,\ldots , F_n$ of $E(G)$, where for any $j\in [n]$ the $\bigtriangleup$-class $F_j$ is generated by the edge $0^n e^j_i$, for some $0 < i \leq k_j-1$.
\end{cor}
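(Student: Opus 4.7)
The plan is to combine Lemma~\ref{l:0inTriangleClass} with the characterization of $F_j$ recalled immediately before the corollary in order to pin the number of $\bigtriangleup$-classes to exactly $n$. The argument splits naturally into an upper bound, a lower bound (obtained through minimality of $H$), and a pairwise-distinctness step.

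For the upper bound, I would first observe that every edge of $G$ incident to $0^n$ has the form $0^n e^j_i$ for some $j \in [n]$ and some $i \in \{1, \ldots, k_j - 1\}$, because $G$ is an induced subgraph of $H$ and so any two adjacent vertices differ in exactly one coordinate. By Lemma~\ref{l:0inTriangleClass}, every $\bigtriangleup$-class of $G$ contains such an edge, and by the characterization recalled just before the corollary all edges of the form $0^n e^j_i$ with a fixed $j$ lie in the single class $F_j$. Hence every $\bigtriangleup$-class of $G$ coincides with one of $F_1, \ldots, F_n$, giving at most $n$ classes.

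For the lower bound I would invoke the minimality of $H$: if the $j$-th coordinate were $0$ on every vertex of $G$, the $j$-th factor $K_{k_j}$ could be removed from $H$ and $G$ would still be realized as an isometric daisy subgraph of the resulting strictly smaller Hamming graph with respect to the corresponding all-zero root, contradicting the choice of $H$. Hence for each $j \in [n]$ there exists some $v \in V(G)$ with $v_j = i \neq 0$, and since $G$ is a daisy graph with root $0^n$ we have $e^j_i = 0^{j-1} i 0^{n-j} \in I_H(v, 0^n) \subseteq V(G)$, so the edge $0^n e^j_i$ belongs to $E(G)$ and $F_j$ is nonempty. Pairwise distinctness is then immediate from the cited characterization: $F_j$ consists of edges of $G$ whose endpoints differ only in the $j$-th coordinate, so for $j \neq j'$ the edge $0^n e^{j'}_{i'}$ cannot lie in $F_j$. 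The only step that requires care is the minimality argument; everything else follows directly from Lemma~\ref{l:0inTriangleClass} and the characterization of $F_j$.
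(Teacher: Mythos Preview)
Your proposal is correct and follows essentially the same approach as the paper: both use Lemma~\ref{l:0inTriangleClass} together with the description of $F_j$ stated just before the corollary to conclude that the $\bigtriangleup$-classes are exactly the $F_j$. The paper leaves the minimality argument (ensuring each $F_j$ is nonempty) and the pairwise-distinctness step implicit, whereas you spell them out in full, but the underlying reasoning is the same.
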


Let $G$ be an isometric daisy graph of a Hamming graph $H=H_{k_1,\ldots , k_n}$ with respect to $0^n$, where $H$ is the smallest possible. Let $j\in [n]$ and $i\in [k_j]_0$. A subgraph $\langle W_i^j \rangle$ of a graph $G$ is called {\em peripheral} if $U_i^j=W_i^j$. The $\bigtriangleup$-class $F$ generated by the edge $0^n e^j_l$, for some $0 < l \leq k_j-1$, of the graph $G$ is called {\em peripheral} if $U_{l'}^j=W_{l'}^j$, for any $l' \in \{1,\ldots ,k_j-1\}$.

\begin{lemma}\label{l:peripheral}
If $G$ is an isometric daisy graph of a Hamming graph $H=H_{k_1,\ldots , k_n}$ with respect to $0^n$, where $H$ is the smallest possible, then every $\bigtriangleup$-class $F$ of the graph $G$ is peripheral. 
\end{lemma}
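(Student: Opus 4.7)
The plan is to exploit the defining property of a daisy graph directly: for any vertex $x\in V(G)$, the entire interval $I_H(x,0^n)$ sits inside $V(G)$, and as an induced subgraph of $H$ every edge of $H$ with both endpoints in $V(G)$ is present in $G$. Fix a $\bigtriangleup$-class $F$. By Corollary~\ref{c:TiangleClasses} there is a coordinate $j\in [n]$ so that $F=F_j$, and to show $F_j$ is peripheral I must verify $U_{l'}^j=W_{l'}^j$ for every $l'\in\{1,\ldots,k_j-1\}$.

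The core step is to take an arbitrary $x=x_1\ldots x_n\in W_{l'}^j$ (so $x_j=l'\neq 0$) and exhibit a neighbour of $x$ in $W_0^j$. The natural candidate is the vertex $y=x_1\ldots x_{j-1}\,0\,x_{j+1}\ldots x_n$, obtained from $x$ by zeroing the $j$-th coordinate. A one-line Hamming-distance check gives $d_H(x,y)+d_H(y,0^n)=1+(d_H(x,0^n)-1)=d_H(x,0^n)$, so $y\in I_H(x,0^n)$. Since $G$ is a daisy graph of $H$ with root $0^n$ and $x\in V(G)$, Definition~\ref{defDG} forces $y\in V(G)$. The edge $xy$ belongs to $H$ (they differ in exactly one coordinate), and because $G=\langle V(G)\rangle_H$ is induced, $xy\in E(G)$ as well. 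Since $y_j=0$ we get $y\in W_0^j$, so $x\in U_{l'}^j$. Combined with the trivial inclusion $U_{l'}^j\subseteq W_{l'}^j$ coming from the definition, this yields $U_{l'}^j=W_{l'}^j$.

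Running this argument for each $l'\in\{1,\ldots,k_j-1\}$ with $W_{l'}^j\neq\emptyset$ establishes the peripherality of $F_j$, and since $F$ was an arbitrary $\bigtriangleup$-class the lemma follows. The only point that might look like an obstacle is the implicit use that $G$ is an \emph{induced} subgraph of $H$, but this is built into Definition~\ref{defDG} via the angle brackets, so no extra work is needed. Everything else is a direct application of the daisy-graph interval-closure property together with Corollary~\ref{c:TiangleClasses}, which already pins each $\bigtriangleup$-class to a single Hamming coordinate.
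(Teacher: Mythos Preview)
Your proof is correct and follows essentially the same approach as the paper's: fix a $\bigtriangleup$-class $F_j$, take an arbitrary vertex $x\in W_{l'}^j$ with $l'\neq 0$, zero its $j$-th coordinate to obtain a vertex in $I_H(x,0^n)\subseteq V(G)$, and observe this gives a neighbour of $x$ in $W_0^j$. Your version is slightly more explicit in spelling out why $xy\in E(G)$ via the induced-subgraph property and why $y\in I_H(x,0^n)$, but the argument is the same.
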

\begin{proof}
Let $F$ be an arbitrarily chosen $\bigtriangleup$-class of $G$, such that $0^n e^j_l \in F$. Let $i \in \{1,\ldots, k_j-1\}$ be arbitrary. To prove the assertion, we will show that any vertex of $W^j_i$ has a neighbour in $W^j_0$ (which means $W^j_i = U^j_i$). Take any $x = x_1 \ldots x_n \in W^j_i$, hence $x_j = i$. Now, consider $x' = x_1 \ldots x_{j-1} 0 x_{j+1} \ldots x_n$. Note, that $x' \in I_H(0^n,x) \subseteq V(G)$ and therefore $x' \in W^j_0$. Since $xx' \in E(G)$, the assertion follows.
\end{proof}

\begin{lemma}\label{l:daisySubgraphs}
Let $G$ be an isometric daisy graph of a Hamming graph $H=H_{k_1,\ldots , k_n}$ with respect to $0^n$, where $H$ is the smallest possible. For every $j\in[n]$ and any $i\in [k_j]_0$ the subgraph $\langle W_i^j \rangle$ of the graph $G$ is a daisy graph of $H'=H_{k_1,\ldots ,k_{j-1},k_{j+1},\ldots,k_n}$ with respect to $0^{n-1}$. 
\end{lemma}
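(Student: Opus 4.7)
The plan is to exhibit an explicit isomorphism between $\langle W_i^j\rangle_G$ and a daisy graph of $H'$ with respect to $0^{n-1}$. Let $\phi \colon W_i^j \to V(H')$ be the coordinate-dropping map defined by $\phi(v_1\ldots v_n)=v_1\ldots v_{j-1}v_{j+1}\ldots v_n$, and let $X' = \phi(W_i^j) \subseteq V(H')$. I would show that $\phi$ is a graph isomorphism from $\langle W_i^j\rangle_G$ onto $H'_{0^{n-1}}(X')$, which by Definition \ref{defDG} identifies $\langle W_i^j\rangle$ with a daisy graph of $H'$ with respect to $0^{n-1}$.

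First I would verify that $\phi$ behaves well on the structure. Since every vertex of $W_i^j$ has $j$-th coordinate equal to $i$, the map $\phi$ is injective. Two vertices $u,w \in W_i^j$ are adjacent in $G$ iff they are adjacent in $H$, iff they differ in exactly one coordinate; because $u_j=w_j=i$ this coordinate must differ from $j$, which is precisely the condition for $\phi(u)$ and $\phi(w)$ to be adjacent in $H'$. So $\phi$ is an isomorphism from $\langle W_i^j\rangle_G$ onto the induced subgraph $\langle X'\rangle_{H'}$.

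The heart of the argument is to show $\langle X'\rangle_{H'} = H'_{0^{n-1}}(X')$, that is, $V(H'_{0^{n-1}}(X'))=X'$. The inclusion $X' \subseteq V(H'_{0^{n-1}}(X'))$ is immediate from the definition of a daisy graph. For the reverse inclusion, take $\tilde u \in V(H'_{0^{n-1}}(X'))$, so $\tilde u \in I_{H'}(\tilde v,0^{n-1})$ for some $\tilde v=\phi(v)$ with $v\in W_i^j$. Using the interval characterisation in a Hamming graph recalled in the preliminaries, $\tilde u_k \in \{0,\tilde v_k\}$ for every coordinate $k$. Define $u\in V(H)$ by inserting $i$ at the $j$-th position, namely $u_k=\tilde u_k$ for $k\neq j$ and $u_j=i$. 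Since $v_j=i=u_j$, we obtain $u_k \in \{0,v_k\}$ for every $k\in[n]$, hence $u \in I_H(v,0^n)$. As $v\in V(G)$ and $G$ is a daisy graph of $H$ with respect to $0^n$, we conclude $u \in V(G)$, and since $u_j=i$, $u\in W_i^j$, giving $\tilde u = \phi(u) \in X'$.

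Combining these facts, $\phi$ is a graph isomorphism from $\langle W_i^j\rangle_G$ onto $H'_{0^{n-1}}(X')$, which is exactly what is required. The only substantive step is the lifting of $\tilde u$ to $u$ in the second paragraph above; however, this is essentially automatic once one combines the interval description in Hamming graphs with the closure property that defines a daisy graph. Note that the argument is uniform in $i\in[k_j]_0$: for $i=0$ the insertion trivially places a $0$ at position $j$, and for $i\neq 0$ the matching $v_j=i=u_j$ is what makes the interval condition hold in the last coordinate. The assumption that $H$ is smallest possible plays no special role here; the statement follows purely from $G$ being a daisy graph of $H$ with respect to $0^n$.
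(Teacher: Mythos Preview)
Your proof is correct and follows essentially the same approach as the paper's: both define the coordinate-dropping projection (your $\phi$, the paper's $r$) from $W_i^j$ to $H'$, and both establish the daisy property of the image by lifting an arbitrary point $\tilde u\in I_{H'}(\phi(v),0^{n-1})$ back to $u\in I_H(v,0^n)\subseteq V(G)$ via insertion of $i$ at the $j$-th coordinate. Your write-up is slightly more explicit about why $\phi$ is a graph isomorphism onto the induced subgraph, and your observation that minimality of $H$ is not used in this lemma is accurate.
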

\begin{proof}
Define $X^j_i=\{ x_1 \ldots x_{j-1} x_{j+1} \ldots x_n \ |\ x_1\ldots x_n \in W^j_i\}$. Let $r:W^j_i \rightarrow X^j_i$ be the projection defined by $r: x_1\ldots x_n \mapsto x_1 \ldots x_{j-1} x_{j+1} \ldots x_n$, which is clearly bijection between $W^j_i$ and $X^j_i$. 

Let $u=u_1 \ldots u_{n-1} \in X^j_i$ be arbitrary and $w \in I_{H'}(0^{n-1},u)$. We claim that $w \in X^j_i$. Since $u \in X^j_i$, it follows from the definition of $X^j_i$ that $u'=u_1\ldots u_{j-1} i u_j\ldots u_{n-1} \in W^j_i$. Since $w \in I_{H'}(0^{n-1},u)$, it follows that $w_l = u_l$ or $w_l = 0$, for all $1\leq l \leq n-1$. Let $w'=w_1 \ldots w_{j-1} i w_{j} \ldots w_{n-1}$. Since $w' \in I_H(0^n,u')$, it follows that $w' \in V(G)$ and as the $i^{\textrm{th}}$ coordinate of $w'$ is $i$, the vertex $w'$ belongs to $W^j_i.$ By the definition of $X^j_i$, $w \in X^j_i$. Therefore $\langle X^j_i \rangle _{H'}$ is a daisy graph of $H'$ with respect to $0^{n-1}$. Since $\langle W^j_i \rangle _H \cong \langle X^j_i \rangle_{H'}$, the assertion follows.
\end{proof}

In~\cite{Bresar2001} the contraction of a partial Hamming graph $G$ was defined in the following way. Let $uv \in E(G)$ and let $\bigtriangleup$-class with respect to $uv \in E(G)$, denote it by  $\bigtriangleup_{uv}$, be the union of $k$ distinct $\sim$-classes $F_{x_ix_j}$. A graph $G'$ is a contraction of a partial Hamming graph $G$ with respect to the edge $uv \in E(G)$ if each clique induced by edges belonging to $\bigtriangleup_{uv}$ is contracted to a single vertex. For all $i \in [k]$, let $W_i'$ be the set of vertices in $G'$ that corresponds to $W_{x_i}=\{w\in V(G)\ |\ d(w,x_i) < d(w, x_j), \text{ for any }j\ne i\}$. Bre\v sar proved that the expansion of $G'$ relative to $W_1',\ldots , W_k'$ is exactly the graph $G$~\cite{Bresar2001}. 

\begin{theorem}\label{th:contraction}
Let $G$ be an isometric daisy graph of a graph $H=H_{k_1,\ldots, k_n}$ with respect to $0^n$, where $H$ is the smallest possible. Then there exists a daisy graph $G' \subseteq G$ such that $G$ can be obtained from $G'$ by a daisy peripheral expansion.
\end{theorem}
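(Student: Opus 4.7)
My plan is to obtain the desired subgraph $G'$ by choosing one of the $n$ $\bigtriangleup$-classes of $G$ furnished by Corollary~\ref{c:TiangleClasses}, say $F_n$, and setting $G'=\langle W_0^n\rangle_G$. By Lemma~\ref{l:daisySubgraphs} (with $j=n$ and $i=0$), this $G'$ is a daisy graph of $H'=H_{k_1,\ldots,k_{n-1}}$ with respect to $0^{n-1}$, and clearly $G'\subseteq G$. The minimality of $H$ guarantees that every value in $[k_n]_0$ appears as the $n$-th coordinate of some vertex of $G$, so $W_0^n\subsetneq V(G)$ and the reduction is genuine; the case $G=K_1$ is trivial.

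I would then apply Bre\v sar's contraction/expansion theorem~\cite{Bresar2001} (recalled just before the theorem) to the $\bigtriangleup$-class $F_n$. The maximal cliques of $F_n$ are the ``$n$-fibers'' $\{v^{(l)}\in V(G):l\in S_v\}$; each of them contains the vertex $v^{(0)}$ obtained by zeroing the $n$-th coordinate, since the daisy property yields $v^{(0)}\in I_H(0^n,v)\subseteq V(G)$. Hence every contracted clique has a unique representative in $W_0^n$, and this identifies the contracted graph with $\langle W_0^n\rangle_G=G'$. Bre\v sar's theorem then provides $G$ as the expansion of $G'$ relative to sets $W_0',W_1',\ldots,W_{k_n-1}'$ that, as subsets of $V(G')=W_0^n$, are $V(G')$ for $i=0$ and $U_{0i}^n$ for $i\geq 1$. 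This expansion is automatically peripheral, since $W_0'=V(G')$.

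It remains to verify that it is \emph{daisy} peripheral, i.e.\ that $\langle W_i'\rangle_{H'}$ is a daisy graph of $H'$ with respect to $0^{n-1}$ for every $i\in\{1,\ldots,k_n-1\}$. Using Lemma~\ref{l:peripheral}, the map $v\mapsto v^{(i)}$ is a bijection $U_{0i}^n\to W_i^n$, and forgetting the $n$-th coordinate sends both $U_{0i}^n$ and $W_i^n$ to the same set $X_i^n\subseteq V(H')$; thus $\langle W_i'\rangle_{H'}\cong\langle W_i^n\rangle_H$, which is a daisy graph of $H'$ by Lemma~\ref{l:daisySubgraphs} (case $i\geq 1$). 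The main obstacle is bookkeeping among the three avatars $W_i^n\subseteq V(G)$, $U_{0i}^n\subseteq W_0^n$, $X_i^n\subseteq V(H')$ of the same combinatorial object, so that the requirements of a daisy peripheral expansion line up with the conclusions of Lemma~\ref{l:daisySubgraphs}; once these identifications are explicit, conditions 1--4 of Definition~\ref{def:exp} for the expansion follow automatically from Bre\v sar's theorem.
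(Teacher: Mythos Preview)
Your proposal is correct and follows essentially the same route as the paper's proof: pick a $\bigtriangleup$-class (you fix $j=n$, the paper leaves $j$ arbitrary), contract it using Bre\v sar's theorem, and invoke Lemmas~\ref{l:peripheral} and~\ref{l:daisySubgraphs} to see that the resulting expansion is daisy peripheral. Your treatment is in fact a bit more careful than the paper's in explicitly identifying the contracted graph with the subgraph $\langle W_0^n\rangle_G\subseteq G$ and in spelling out that $W_i'=U_{0i}^n$ for $i\geq 1$, which the statement of the theorem requires but the paper's proof leaves implicit.
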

\begin{proof}
Let $F$ be an arbitrary $\bigtriangleup$-class of the graph $G$. By  Corollary \ref{c:TiangleClasses} there exist $j \in [n]$ and $i \in \{1,\ldots , k_j-1\}$ such that $F$ is generated by the edge $0^n e^j_i$. Let the graph $G'$ be obtained from the graph $G$ by a contraction with respect to the edge $0^n e^j_i$. For any $l \in [k_j]_0$, denote by $X_l$ the set of vertices in $G'$ that corresponds to $W^j_l$ in $G$. By the definition of a contraction, the graph $G$ is the expansion of $G'$ relative to sets $X_0,\ldots , X_{k_j-1}$. By Lemma \ref{l:peripheral} it follows that $F$ is a peripheral $\bigtriangleup$-class. Using the fact that $F$ is generated by the edge $0^n e^j_i$, it follows from the definition of peripheral classes, that $U^j_{i'}=W^j_{i'}$, for any $i'\in \{1, \ldots, k_j-1\}$ (every vertex of $W^j_{i'}$ has a neighbour in $W^j_0$). Since $\bigcup_{i=0}^{k_j-i} X_i = V(G)$ (definition of expansion) we obtain that $X_0 = V(G')$. By Lemma \ref{l:daisySubgraphs} it follows that the subgraphs $\langle X_i \rangle_{G'}$ are daisy graphs which proves that $G$ is obtained from $G'$ by daisy peripheral expansion. 
\end{proof}

From Theorem \ref{p:dexpIsD} and Theorem \ref{th:contraction} we immediately obtain the following characterization.

\begin{theorem}
A graph $G$ is an isometric daisy graph of a graph $H=H_{k_1,\ldots , k_n}$ with respect to $0^n$, where $H$ is the smallest possible, if and only if it can be obtained from the one vertex graph by a sequence of daisy peripheral expansions.
\end{theorem}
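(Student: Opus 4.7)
The plan is to prove the two implications by induction on the number of daisy peripheral expansion steps (equivalently, on $n$, the number of Hamming factors of the smallest enclosing Hamming graph), with Theorem \ref{p:dexpIsD} driving one direction and Theorem \ref{th:contraction} driving the other.

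For the ``if'' direction, I would proceed by induction on the number of daisy peripheral expansion steps used to construct $G$ from $K_1$. The base case is trivial: $K_1$ is an isometric daisy graph of $H_1=K_1$ (with $n=0$, the smallest possible). For the inductive step, assume that a graph $G'$ obtained from $K_1$ by $n-1$ daisy peripheral expansions is an isometric daisy graph of some Hamming graph $H'=H_{k_1,\ldots,k_{n-1}}$ with respect to $0^{n-1}$, with $H'$ smallest possible. If $G$ is obtained from $G'$ by one more daisy peripheral expansion relative to sets $V(G')=W_0',W_1',\ldots,W_l'$, then Theorem \ref{p:dexpIsD} immediately gives that $G$ is an isometric daisy graph of $H=K_{l+1}\Box H'$ with respect to $0^n$. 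The only subtlety is verifying that $H$ is the smallest possible Hamming graph containing $G$ as an isometric daisy subgraph. For this I would argue that each expansion step introduces a new $\bigtriangleup$-class (indeed, by Corollary~\ref{c:TiangleClasses} the number of $\bigtriangleup$-classes equals $n$ when the enclosing Hamming graph has $n$ factors), so if $G$ were embeddable in a strictly smaller Hamming graph the number of expansions would have been smaller.

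For the ``only if'' direction, I would induct on $n$, the number of factors of the smallest enclosing Hamming graph $H$. If $n=0$, then $H=K_1$ and hence $G=K_1$, which is obtained from $K_1$ by an empty sequence of expansions. For the inductive step, assume the claim for all smaller $n$ and let $G$ be an isometric daisy graph of $H=H_{k_1,\ldots,k_n}$ with $H$ smallest possible. By Theorem \ref{th:contraction}, there exists a daisy graph $G'\subseteq G$ such that $G$ is a daisy peripheral expansion of $G'$. Inspection of the proof of Theorem \ref{th:contraction} shows that $G'$ is obtained by contracting with respect to a $\bigtriangleup$-class $F_j$ corresponding to some coordinate $j$; the resulting graph is isomorphic to a subgraph $\langle W_0^j\rangle$, which by Lemma \ref{l:daisySubgraphs} is an isometric daisy graph of $H_{k_1,\ldots,k_{j-1},k_{j+1},\ldots,k_n}$ with respect to $0^{n-1}$. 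By the induction hypothesis, $G'$ is obtained from $K_1$ by a sequence of daisy peripheral expansions; appending the expansion that produces $G$ from $G'$ completes the induction.

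The main obstacle I anticipate is the minimality bookkeeping: ensuring in the ``if'' direction that every expansion step genuinely enlarges the smallest enclosing Hamming graph, and in the ``only if'' direction that the contracted graph $G'$ still has its associated Hamming graph smallest possible. Both are handled by counting $\bigtriangleup$-classes via Corollary~\ref{c:TiangleClasses}: the number of $\bigtriangleup$-classes of an isometric daisy Hamming graph exactly equals the number of factors of its smallest enclosing Hamming graph, so each daisy peripheral expansion increases this count by one and each contraction decreases it by one. With this invariant, the induction closes cleanly in both directions.

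\begin{proof}
The claim follows by induction on $n$ (equivalently, on the number of daisy peripheral expansion steps), combining Theorem \ref{p:dexpIsD} and Theorem \ref{th:contraction} as outlined above.
\end{proof}
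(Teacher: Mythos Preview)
Your proposal is correct and takes essentially the same approach as the paper, which gives no proof beyond stating that the result follows immediately from Theorem~\ref{p:dexpIsD} and Theorem~\ref{th:contraction}; you have simply made the underlying induction on $n$ explicit and handled the ``smallest possible'' bookkeeping via Corollary~\ref{c:TiangleClasses}, which the paper leaves implicit. One minor point: Lemma~\ref{l:daisySubgraphs} only asserts that $\langle W_0^j\rangle$ is a \emph{daisy} graph of $H'$, not an \emph{isometric} one---the isometricity you need for the inductive step comes separately from condition~4 of Definition~\ref{def:exp} (guaranteed by Bre\v{s}ar's contraction/expansion correspondence) combined with the isometry of $G$ in $H$.
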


\subsection*{Acknowledgments}
This work was supported by the Slovenian Research Agency under the grants P1-0297, J1-1693 and J1-9109.

\end{document}